\newtheorem{thm}{Theorem}
\newtheorem{lem}{Lemma}
\theoremstyle{definition}
\newtheorem{exm}{Example}
\theoremstyle{remark}
\numberwithin{equation}{section}
\newcommand{\dmn}{\mathop{\rm dom}}
\newcommand{\supp}{\mathop{\rm supp}}
\renewcommand{\kappa}{\varkappa}
\newcommand{\Real}{\mathbb R}
\newcommand{\eps}{\varepsilon}
\newcommand{\cH}{\mathcal{H}}
\newcommand{\cV}{\mathcal{V}}
\newcommand{\cU}{\mathcal{U}}
\newcommand{\cD}{\mathcal{D}}
\newcommand{\cR}{\mathcal{R}}
\newcommand{\cS}{\mathcal{S}}
\newcommand\xe{\left(\tfrac x\eps\right)}
\newcommand{\ra}{\rangle}
\newcommand{\la}{\langle}
\begin{document}
\title[On negative eigenvalues of 1D Schr\"{o}dinger operators]{On negative eigenvalues of 1D Schr\"{o}dinger operators with $\delta'$-like potentials}

\author{Yuriy Golovaty} \address[YG]{Ivan Franko National University of Lviv,
	1 Universytetska st., 79602 Lviv, Ukraine}%
\email{yuriy.golovaty@lnu.edu.ua}
\author{Rostyslav Hryniv} \address[RH]{%Faculty of Applied Sciences,
	Ukrainian Catholic University, 2a Kozelnytska str., 79026, Lviv, Ukraine \and
	%Department of Mathematics and Natural Sciences, the
	University of Rzesz\'{o}w, 1 Pigonia str., 35-310 Rzesz\'{o}w, Poland}%
\email{rhryniv@ucu.edu.ua, rhryniv@ur.edu.pl}

\subjclass[2020]{Primary ; Secondary }

\begin{abstract}
We study the asymptotic behavior of the discrete spectrum of one-dimensional Schr\"{o}dinger operators with $\delta'$-like potentials, which are used to construct exactly solvable models for localized dipoles in quantum mechanics. Although these operators converge in the norm resolvent topology to a limiting operator that is bounded from below, we prove that they can possess a finite but arbitrarily large number of discrete eigenvalues that diverge to negative infinity as the regularization parameter tends to zero. This phenomenon illustrates a spectral instability of Schr\"odinger operators with these singular potentials.
\end{abstract}
\keywords{1D Schr\"{o}dinger operator, point interaction, $\delta$-potential, $\delta'$-potential, exactly solvable model, discrete spectrum}
\subjclass[2020]{Primary 81Q10; Secondary  34L40}
\maketitle

% ----------------------------------------------------------------
\section{Introduction}

The aim of this paper is to establish the existence and describe the asymptotic behavior of negative eigenvalues of one-dimensional Schrödinger operators that serve as regularizations of formal Hamiltonians involving $\delta$ and $\delta'$ potentials. These questions arise in the construction and analysis of exactly solvable models in quantum mechanics, a topic that continues to draw considerable attention in the literature (see~\cite{Albeverio2edition, AlbeverioKurasov}, as well as comprehensive reference lists therein, covering works up to the early 2000s).

Some point interactions (i.e., pseudopotentials supported on discrete sets) naturally lead to well-defined exactly solvable models; others, however, exhibit essential ambiguities in defining the corresponding Hamiltonians. A notable example of this contrast is provided by the $\delta$ and $\delta'$ potentials. In the case of the $\delta$ potential, the differential equation  $-y''+\alpha\delta(x)y=\lambda y$ is well-posed in the space of distributions $\cD'(\Real)$ and has a two-dimensional solution space. In contrast, the equation $-y''+\alpha\delta'(x)y=\lambda y$ is ill-posed in $\cD'(\Real)$ and admits only the trivial solution when $\alpha\neq 0$. Moreover, while every reasonable regularization of Hamiltonians involving the $\delta$ potential yields the same exactly solvable model, the $\delta'$ potential is sensitive to the regularization procedure, and different approximations may lead to different point interactions. As a result, the choice of the exactly solvable model for $\delta'$ potentials is not determined by mathematical considerations alone. However, it must reflect the specifics of the particular physical experiment---a feature that only enriches the study of exactly solvable models.

In this paper, we demonstrate a further distinction between the $\delta$ and $\delta'$ potentials, this time concerning the spectral properties of their regularized Hamiltonians. Natural approximations of both the $\delta$ and $\delta'$ potentials by regular potentials yield operator families that converge in the norm resolvent topology to semi-bounded limits. However, we show that in contrast to $\delta$-like perturbations, $\delta'$-like perturbations lead to operator families that are not uniformly bounded from below as the regularization parameter tends to zero. As a consequence, such regularized Hamiltonians can possess a finite (but arbitrarily large) number of low-lying eigenvalues that diverge to negative infinity. We explicitly determine the number of these eigenvalues and describe their asymptotic behavior in the singular limit.

The rest of the paper is organized as follows.  Section~\ref{sec:review} gives a brief overview of studies on exactly solvable models for Hamiltonians with $\delta$ and $\delta'$ potentials. In Section~\ref{sec:existence}, we derive conditions under which Schr\"odinger operators with $\delta'$-like potentials possess low-lying eigenvalues that diverge to negative infinity as the regularization parameter tends to zero.
Section~\ref{sec:counting} introduces methods for estimating the number of such eigenvalues and shows, in particular, that the emergence of a discrete spectrum is closely related to zero-energy resonances for the corresponding Schr\"odinger operators. In Section~\ref{sec:deltagenerated} we investigate how the non-trivial interaction of $\delta$-like and $\delta'$-like perturbations leads to the emergence of a negative eigenvalue with a finite limit as the perturbation parameter tends to zero. Finally, Section~\ref{sec:asymptotics} contains the proofs of Theorems~\ref{TheoremDeltaPrime} and~\ref{TheoremDelta} on the asymptotic behavior of eigenvalues.

%%%%%%%%%%%%%%%%%%%%%%%%%%%%%%%%%%%%%%%%%%%%%%%%%%%%%%%%%%%%%%%%%%%%%%%%%

\section{Short review of exactly solvable models for $\delta$ and $\delta'$ potentials}\label{sec:review}

%%%%%%%%%%%%%%%%%%%%%%%%%%%%%%%%%%%%%%%%%%%%%%%%%%%%%%%%%%%%%%%%%%%%%%%%%

In this section, we review existing approaches to constructing exactly solvable quantum mechanical models for one-dimensional Hamiltonians with pseudo-potentials involving the Dirac $\delta$-function and its derivative $\delta'$.

The simplest case is the formal (pseudo-)Hamiltonian
\begin{equation}\label{pseudoHDelta}
	- \frac{d^2}{dx^2} + \alpha \delta(x),\quad \alpha\in\Real.
\end{equation}
Any reasonable method of associating a self-adjoint Hamiltonian to~\eqref{pseudoHDelta}---such as form-sum, generalized sum method, appro\-xi\-mation by regular potentials---yields the same operator $H$, acting as $H y = - y''$ on the domain
\begin{equation*}
	\dmn H = \big\{y\in W^2_2(\Real\setminus{0}) \colon y(+0)=y(-0),\;\; y'(+0) - y'(-0) = \alpha y(0)\big\}.
\end{equation*}
In other words, the distributional potential $\alpha\delta(x)$ in the one-dimensional Schr\"odinger operator results in the point interaction imposing the interface condition
\begin{equation}\label{DeltaPointInteraction}
\begin{pmatrix}
  y(+0)\\y'(+0)
\end{pmatrix}=
\begin{pmatrix}
  1& 0\\
  \alpha & 1
\end{pmatrix}
\begin{pmatrix}
  y(-0)\\y'(-0)
\end{pmatrix}.
\end{equation}
Moreover, this model serves as a good approximation in the norm resolvent sense of the Schr\"odinger operators with integrable potentials of special form. Specifically, given a real-valued function $U$ of compact support such that
\begin{equation*}
    \alpha=\int_{\Real} U(x)\,dx,
\end{equation*}
the scaled potentials $U_\eps(x) = \eps^{-1} U(\eps^{-1}x)$ converge in the space of distributions $\cD'$ to the distribution $\alpha\delta(x)$ as $\eps\to 0$, and the corresponding operators
\begin{equation*}
  - \frac{d^2}{dx^2} + \eps^{-1} U(\eps^{-1}x)
\end{equation*}
converge to $H$ in the norm resolvent sense~\cite[Theorem~I.3.2.3]{Albeverio2edition}, i.e. their resolvents converge in operator norm to the resolvent of $H$. Similar convergence results hold even in the presence of background potentials $W$, i.e., for operators of the form
\begin{equation*}
	- \frac{d^2}{dx^2} + W(x) + \alpha \delta(x).
\end{equation*}

%\begin{figure}[!hb]
%  \centering
%  \includegraphics[scale=.75]{}\\
%  \caption{An example of $\delta'$-like potentials.}\label{FigDeltaPrime}
%\end{figure}

One should not expect that every pseudopotential gives rise to a unique point interaction. Certain pseudopotentials are highly sensitive to the way they are approximated, and the $\delta'$ potential is one of them. In physics, the symbol $\delta'$ is often used to describe a strongly localized dipole-type potential, such as a high narrow barrier followed by a deep well. Let $V$ be an integrable function with compact support and a finite first moment; then the sequence $\eps^{-2} V(\eps^{-1}x)$ converges in $\cD'$, as $\eps\to0$, if and only if $\int_\Real V(x) \,dx=0$, and in that case
\begin{equation}\label{DeltaPrimeConv}
  \eps^{-2} V(\eps^{-1}x)\to \beta \delta'(x),
\end{equation}
where $\beta=-\int_\Real x V(x)\,dx$. For this reason, we refer to such families of scaled potentials as $\delta'$-\textit{like}.

The question of how to correctly define the formal Hamiltonian
\begin{equation}\label{eq:BetaDeltaPrime}
	- \frac{d^2}{dx^2} + \beta \delta'(x)
\end{equation}
has a long and intricate history. As mentioned above, difficulties arise already at the level of interpreting the differential expression in~\eqref{eq:BetaDeltaPrime}, since the equation~$-y''+\beta\delta'(x)y=\lambda y$ admits only the trivial solution in the space of distributions~$\cD'$. Indeed, the product $\delta'(x)\phi(x)$ is well defined in $\cD'$ only if $\phi$ is continuously differentiable, and in that case it is equal to the distribution $\phi(0)\delta'(x) - \phi'(0)\delta(x)$. However, any nontrivial solution $y$ of the above equation would have to be discontinuous at the origin, since its second derivative $y'' = \beta\delta'(x)y - \lambda y$ would necessarily include a $\delta'$ term. In this case, the product $\delta'(x) y(x)$ is not defined in~$\cD'$, making the equation invalid.

Moreover, the operator in \eqref{eq:BetaDeltaPrime} cannot be rigorously defined using standard approaches like the form-sum or generalized sum methods, or as a relatively bounded perturbation of the free Hamiltonian. For this reason, it is natural to approach this problem via regularization: one considers  families of Schr\"odinger operators of the form
\begin{equation*}
	H_\eps= -\frac{d^2}{dx^2}+\eps^{-2}\,V(\eps^{-1}x), \qquad \dmn H_\eps=W^2_2(\Real),
\end{equation*}
with $\delta'$-like potentials $\eps^{-2}\,V(\eps^{-1}x)$ as a starting point for studying physical phenomena associated with zero-range dipoles. The construction of exactly solvable models for such dipole interactions is thus reduced to analyzing the limits of $H_\eps$ as $\eps\to0$.

It has been shown that the operator family $H_\eps$ indeed converges in the norm resolvent sense as $\eps \to 0$. In a seminal paper, \v{S}eba~\cite{SebRMP} argued that the limiting operator is the free Hamiltonian~$D_0$ decoupled at the origin by the Dirichlet condition, namely
\begin{equation}\label{OprD0}
  D_0 y = - y'',\qquad \dmn D_0 = \big\{y\in W^2_2(\Real\setminus{0}) \colon y(0)= 0\big\}.
\end{equation}
According to this result, no meaningful definition of a Schr\"odinger operator with a $\delta'$-potential would be possible, since the limit $D_0$ is completely impenetrable to a quantum particle and is independent of the specific form of the function~$V$. However, this conclusion contradicts the findings of Zolotaryuk \textit{et al.}~\cite{ChristianZolotarIermak03, Zolotaryuk08, Zolotaryuk09, Zolotaryuk10}, who analyzed transmission probabilities through piecewise constant $\delta'$-like potentials and observed examples of quantum tunnelling. These results prompted the revision of~\cite{SebRMP}; it was later rigorously proved in~\cite{GolovatyHrynivJPA2010} that the operator $D_0$ is the norm resolvent limit of $H_\eps$ only in the so-called \textit{non-resonant} case, while in the \textit{resonant} case, the situation is different.

We begin by recalling the relevant definitions~\cite{Klaus1982,HintonKlausShaw91a,HintonKlausShaw91b}.
The operator~$-\frac{d^2}{dx^2} + V$ is said to have a \emph{zero-energy resonance} if the equation $v'' = Vv$ admits a non-trivial solution~$v$ that is bounded on the entire real line. Such a solution is called a \emph{half-bound state}, and the potential~$V$ is then referred to as \emph{resonant}. Every half-bound state~$v$ has finite, nonzero limits $v_\pm$ at $\pm\infty$, and the ratio
\begin{equation*}
    \theta = \frac{v_+}{v_-}
\end{equation*}
is uniquely determined by~$V$.
As proved in~\cite{GolovatyHrynivJPA2010} (see also \cite{GolovatyManko2009, GolovatyHrynivProcEdinburgh2013}), if the potential~$V$ is resonant, then the family $H_\eps$ converges in the norm resolvent sense as $\eps \to 0$ to the self-adjoint operator
\begin{equation*}
H(\theta)=-\frac{d^2}{dx^2},\;\; \dmn H(\theta) = \big\{y\in W^2_2(\Real\setminus{0} ): y(+0) = \theta y(-0),\; \theta y'(+0) = y'(-0)\}.
\end{equation*}
We call $H(\theta)$ the \textit{Schr\"odinger operator with $\delta'_\theta$ potential}, with $\theta$ specifying the above interface conditions.

Regardless of whether the family $\eps^{-2} V(\eps^{-1}x)$ converges in $\cD'$ as $\eps\to0$ or not, the Schr\"o\-din\-ger operators $H_\eps$ converge in the norm resolvent sense to either $H(\theta)$ or $D_0$ depending on whether $V$ is resonant or non-resonant. Moreover, there is no functional dependence between the constant $\beta$ appearing in the distributional limit~\eqref{DeltaPrimeConv} and the interface parameter~$\theta$ in the point interaction
\begin{equation}\label{ThetaInteractions}
\begin{pmatrix}
  y(+0)\\ y'(+0)
\end{pmatrix}=
\begin{pmatrix}
  \theta& 0\\
  0 & \theta^{-1}
\end{pmatrix}
\begin{pmatrix}
  y(-0)\\ y'(-0)
\end{pmatrix}
\end{equation}
corresponding to~$H(\theta)$.
Two different resonant, zero-mean potentials $V$ may produce the same $\beta$ but different values of $\theta$, and conversely, the same $\theta$ may arise for different $\beta$. It is worth noting that Kurasov~\cite{KurElaMSI1993, Kurasov1996} was the first to establish a connection between the $\delta'$-potential and the point interactions described by~\eqref{ThetaInteractions}.

In~\cite{GolovatyMFAT2012, GolovatyIEOT2012},
it was proved that the approximations of pseudo-Hamiltonians $ - \frac{d^2}{dx^2}+\alpha \delta(x)+ \beta \delta'(x)$ by the Hamiltonians
\begin{equation*}
   -\frac{d^2}{dx^2}+ \eps^{-1} U(\eps^{-1}x)+\eps^{-2}\,V(\eps^{-1}x)
\end{equation*}
with every regular functions~$U$ and $V$ also converge in the norm resolvent topology. If $V$ is non-resonant, the operators converge to $D_0$. However, if $V$ is reso\-nant with a half-bound state $v$, then the limiting operator is associated with point interaction producing the interface conditions
\begin{equation}\label{DD'interactions}
\begin{pmatrix}
    y(+0)\\y'(+0)
\end{pmatrix}=
\begin{pmatrix}
  \theta& 0\\
  \eta & \theta^{-1}
\end{pmatrix}
\begin{pmatrix}
    y(-0)\\y'(-0)
\end{pmatrix},
\end{equation}
where
\begin{equation}\label{ThetaEta}
 \theta = \frac{v_+}{v_-}, \qquad \eta=\frac1{v_-v_+}\int_{\Real} Uv^2\,dx.
\end{equation}
A comprehensive study of exactly solvable models with point interactions \eqref{DD'interactions} has been done by Gadella, Nieto  \textit{et al} \cite{GadellaHerasNietoas2009, GadellaNegroNieto2009, GadellaGlasserNieto2011}. Besides the approximation of pseudo\-potentials by regular potentials, there are other methods to construct exactly solvable models: e.g., the method of self-adjoint extensions has been used by Nizhnik \cite{NizhFAA2003, NizhFAA2006}, and the distributional approach has been proposed by Lunardi, Manzoni \textit{et al}\cite{LunardiManzoni2014, LunardiManzoni2016}.

We note that the point interactions characterized by the interface conditions
\begin{equation*}
\begin{pmatrix}
  y(+0)\\ y'(+0)
\end{pmatrix}=
\begin{pmatrix}
  1& \beta\\
  0 & 1
\end{pmatrix}
\begin{pmatrix}
  y(-0)\\ y'(-0)
\end{pmatrix},
\end{equation*}
commonly referred to as $\delta'$-\textit{interactions}, are also sometimes interpreted as models of the formal $\delta'$ potential.
Exner, Neidhardt, and Zagrebnov~\cite{ExnerNeidhardtZagrebnov2001} proposed a refined potential approximation of such interactions using a family of three $\delta$-like potentials centered at the points $\pm a$ and $0$, with the separation distance $a$ tending to zero in a carefully coordinated way with the coupling constants.
Further contributions in this direction include the works of Cheon and Shigehara~\cite{CheonShigehara1998}, Zolotaryuk~\cite{ZolotaryukThreeDelta17}, and Albeverio, Fassari, and Rinaldi~\cite{FassariRinaldi2009,AlbeverioFassariRinaldi2013,AlbeverioFassariRinaldi2016}; see also the recent publication~\cite{FassariGadellaNietoRinaldi2024}.
Although the potential families in~\cite{ExnerNeidhardtZagrebnov2001} do not converge to $\delta'$ in the sense of distributions, the term ``$\delta'$-interactions'' can be partially justified by interpreting  $\delta'$ as a finite-rank perturbation; see~\cite{AlbeverioKoshmanenkoKurasovNizhnik2002, KuzhelNizhnik2006} for further discussion.

Let $\la\,\cdot\,,\, \cdot\,\ra$ be the dual pairing between the Sobolev spaces $W_2^{-s}(\Real)$ and $W_2^{s}(\Real)$. Since $\delta(x)y(x)=y(0)\delta(x)=\la\delta, y\ra \delta(x)$,  the formal operator \eqref{pseudoHDelta} can be written as
\begin{equation}\label{DeltaRankOne}
  -\frac{d^2}{dx^2}+\alpha \la\delta,\, \cdot\,\ra \delta(x).
\end{equation}
This shows that the $\delta$-potential can be interpreted as a rank-one perturbation of the free Hamiltonian, and the standard theory of regular finite-rank perturbations yields the same exactly solvable model as in~\eqref{DeltaPointInteraction}. In the physical literature, the $\delta'$-interaction is typically associated with rank-one perturbation of the free Hamiltonian as in~\eqref{DeltaRankOne} but with $\delta'$ in place of $\delta$~\cite[Ch.~1.4]{Albeverio2edition}:
\begin{equation}\label{DeltaPrimRankOne}
  -\frac{d^2}{dx^2}+\beta \la\delta',\, \cdot\,\ra \delta'(x).
\end{equation}
The more general results of~\cite{AlbNizh2000, AlbeverioKoshmanenkoKurasovNizhnik2002} imply that there exist regular potentials $\phi_\eps, \psi_\eps \in L^2(\Real)$ converging to $\delta'$ in~$\cD'$ such that the rank-one perturbations of the free Hamiltonian,
\begin{equation*}
    -\frac{d^2}{dx^2}+ \beta  \la \phi_\eps,\ \cdot\ \ra \psi_\eps(x),
\end{equation*}
converge to~\eqref{DeltaPrimRankOne} in the strong resolvent topology as $\eps\to0$. The difference, in terms of spectral effects, between the perturbation of the so-called 1D conic oscillator consisting of a mixed potential $\alpha\delta + \beta\delta'$ and the one with the $\delta'$-interaction in \eqref{DeltaPrimRankOne} was investigated in detail in~\cite{Fassari2018}.

However, the model~\eqref{DeltaPrimRankOne} is not directly related to the formal expression \eqref{eq:BetaDeltaPrime} with a $\delta'$-potential. Indeed, if the product $\delta'(x)y(x)$ is well defined in the distributional sense, then
\[
    \delta'(x)y(x)=y(0)\delta'(x)-y'(0)\delta(x).
\]
Using this identity, the formal expression \eqref{eq:BetaDeltaPrime} can be interpreted as a rank-two perturbation of the free Schrödinger operator:
\begin{equation*}
  -\frac{d^2}{dx^2}+\beta \la\delta,\, \cdot\,\ra \delta'(x)+\beta \la\delta',\, \cdot\,\ra \delta(x).
\end{equation*}

In \cite{GolovatyJPA2018, GolovatyIEOT2018}, the norm resolvent convergence of the regular Hamiltonians
\begin{equation*}
  -\frac{d^2}{dx^2}+(g_\eps,\cdot) \,f_\eps
  +(f_\eps,\cdot) \,g_\eps +\eps^{-1}U\xe.
\end{equation*}
was studied. Here, $f_\eps$ and $g_\eps$ are sequences of complex-valued functions in $C_0^\infty(\Real)$ such that $f_\eps \to \delta'$ and $g_\eps \to \delta$ in the distributional sense, and $(\cdot, \cdot)$ denotes the inner product in $L^2(\Real)$. Under suitable assumptions on $f_\eps$, $g_\eps$, and the potential~$U$, such operators were shown to approximate the two-parameter family of point interactions defined by the interface conditions
\begin{equation*}
\begin{pmatrix}
  y(+0)\\ y'(+0)
\end{pmatrix}=
\begin{pmatrix}
  \mu & \kappa\\
  0 & \mu^{-1}
\end{pmatrix}
\begin{pmatrix}
  y(-0)\\ y'(-0)
\end{pmatrix}.
\end{equation*}

Although there is no established theory of distributions on metric graphs, the notions of $\delta$-like and $\delta'$-like potentials can be naturally extended to this setting. The construction of exactly solvable models on quantum graphs, as well as the approximation of singular vertex couplings---including mixed $\alpha\delta' + \beta\delta$ interactions---has been explored in~\cite{CheonExner2004, Manko2010, ExnerManko2013, GolovatyAHP2023}.

The above results illustrate the richness of approaches to modeling point interactions and exactly solvable models in quantum mechanics. While $\delta$-potentials admit a canonical interpretation, the situation becomes especially delicate when the formal $\delta'$-potential is involved, as different approximations may lead to different exactly solvable models. The choice of the appropriate limit operator is, therefore, not unique and must be guided by the physical or mathematical context of the problem.

%%%%%%%%%%%%%%%%%%%%%%%%%%%%%%%
%
\section{Existence of low-lying eigenvalues for $\delta'$-like potentials}\label{sec:existence}
%
%%%%%%%%%%%%%%%%%%%%%%%%%%%%%%%

Let us consider the family of operators
\begin{equation}\label{Heps}
 \cH_\eps= -\frac{d^2}{dx^2}+W(x)+ \eps^{-1} U(\eps^{-1}x)+\eps^{-2}\,V(\eps^{-1}x), \qquad \dmn \cH_\eps=W^2_2(\Real),
\end{equation}
where $U$, $V$ and $W$ are compactly supported $L^\infty(\Real)$-potentials. This restriction on the potentials avoids unnecessary technical complications; however, the results remain valid for a significantly broader class of potentials  (cf.~\cite{GolovatyHrynivProcEdinburgh2013} for an example of how this constraint can be relaxed). We are interested in the emergence of negative eigenvalues in Hamiltonians due to $\delta$-like and $\delta'$-like perturbations. Accordingly, we assume that $W \ge 0$, so that the unperturbed operator $H_0 = -\frac{d^2}{dx^2} + W$ is non-negative and has a purely continuous spectrum.

As follows from the result of \cite{GolovatyMFAT2012}, the operators $\cH_\eps$ converge in the norm resolvent sense as $\eps\to0$. If the potential $V$ is resonant, i.e., possesses a half-bound state $v$ (see Section~\ref{sec:review}), then $\cH_\eps$ converge to the operator
\begin{multline}\label{Hthetaeta}
  \cH=-\frac{d^2}{dx^2}+W, \quad \dmn \cH=\big\{\phi\in W^2_2(\Real\setminus\{0\})\colon \\
  \phi(+0)=\theta\phi(-0), \quad\phi'(+0)=\theta^{-1}\phi'(-0)+\eta \phi(-0)\big\},
\end{multline}
where $\theta$ and $\eta$ are given by \eqref{ThetaEta}.
In the non-resonant case, the family converges to the operator $D_0 = -\frac{d^2}{dx^2} + W$ subject to the Dirichlet boundary condition at the origin as in~\eqref{OprD0}. Both $\cH$ and $D_0$ can be interpreted as perturbations of the operator $H_0$ by point interactions at the origin.

If the potential $V$ is zero, then the family $\cH_\eps$  is uniformly bounded from below as $\eps\to 0$. This follows from the fact that the $\delta$-like perturbation is   form-bounded relative~$H_0$, with relative bound $a<1$, so that there is a $b>0$ such that
\begin{equation*}
 \forall\,\phi\in W^2_2(\Real): \qquad  \eps^{-1}\big(U(\eps^{-1}\cdot)\phi,\phi\big)\le a(H_0\phi,\phi)+b(\phi,\phi).
\end{equation*}
In this case, as we show below, the operators~$\cH_\eps$ may have at most one eigenvalue, and this eigenvalue converges to a finite limit as~$\eps\to 0$.

In contrast, when $V$ is not identically zero, although the limiting operators $\cH$ and $D_0$ are semibounded from below, the family $\cH_\eps$ is generally not uniformly bounded from below. Consequently, the family $\cH_\eps$ may exhibit eigenvalues that diverge to $-\infty$ as $\eps \to 0$; we refer to such eigenvalues as \emph{low-lying eigenvalues}.

The following result characterizes precisely when such eigenvalues may occur.

%%%%%%%%%%%%%%%%%%%%%%%%%%%%%%%%%%%%%%%%%%%%
\begin{thm}
Let $\cH_\eps$ be the family of operators defined by \eqref{Heps}. Then the operators~$\cH_\eps$ admit low-lying eigenvalues as $\eps\to0$ if and only if the potential $V$ is not identically zero and
\begin{equation*}
  \int_{\Real}V\,dx\le 0.
\end{equation*}
Moreover, the number of such eigenvalues is finite.
\end{thm}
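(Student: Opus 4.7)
The plan is to reduce the analysis to a fixed spectral problem via the scaling $\phi(x)=\psi(x/\eps)$, and then to apply classical one-dimensional Schr\"odinger theory. Substituting this ansatz into the eigenvalue equation $\cH_\eps\phi=\lambda\phi$ and multiplying by $\eps^{2}$ yields the equivalent rescaled equation
\begin{equation*}
-\psi''(t)+\bigl[V(t)+\eps U(t)+\eps^{2}W(\eps t)\bigr]\psi(t)=\eps^{2}\lambda\,\psi(t),\qquad t\in\Real,
\end{equation*}
so an eigenvalue $\lambda_\eps$ of $\cH_\eps$ is the same as an eigenvalue $E_\eps=\eps^{2}\lambda_\eps$ of the rescaled operator $L_\eps:=-d^{2}/dt^{2}+V+\eps U+\eps^{2}W(\eps\cdot)$. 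Low-lying eigenvalues $\lambda_\eps\to-\infty$ correspond exactly to eigenvalues $E_\eps$ of $L_\eps$ that remain negative and bounded away from $0$. As $\eps\to 0$, the operator $L_\eps$ converges to $L:=-d^{2}/dt^{2}+V$ on $L^{2}(\Real)$ in a form-sense strong enough to preserve negative discrete eigenvalues (the perturbations $\eps U+\eps^{2}W(\eps\cdot)$ are uniformly bounded and small in operator norm). Thus the theorem reduces to a statement about $L$: it has at least one negative eigenvalue if and only if $V\not\equiv 0$ and $\int V\le 0$, together with a uniform finiteness bound.

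For the \emph{sufficient} direction I would construct an explicit variational witness for $L$. The spread-out trial function $\psi_\mu(t)=e^{-\mu|t|}$ gives
\begin{equation*}
(L\psi_\mu,\psi_\mu)=\mu+\int_\Real V(t)e^{-2\mu|t|}\,dt,
\end{equation*}
which tends to $\int_\Real V\,dt$ as $\mu\to 0^{+}$; hence it is strictly negative for sufficiently small $\mu$ whenever $\int V<0$, producing a bound state by the min--max principle. In the borderline case $\int V=0$, $V\not\equiv 0$, expanding the exponential to the next order gives $(L\psi_\mu,\psi_\mu)=\mu\bigl(1-2\int_\Real V(t)|t|\,dt\bigr)+O(\mu^{2})$, and I would either shift the centre of the exponential, choosing $\psi_\mu(t)=e^{-\mu|t-t_0|}$ with $t_0$ tuned so that $\int V(t)|t-t_0|\,dt>\tfrac12$, or combine the ansatz with the half-bound state $v$ from Section~\ref{sec:review} (which exists precisely when $\int V=0$ and $V$ is resonant) to make the coefficient negative. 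Once a bound state $\psi_0$ of $L$ at energy $-\mu_0<0$ is exhibited, the rescaled trial function $\phi_\eps(x)=\psi_0(x/\eps)$ yields $(\cH_\eps\phi_\eps,\phi_\eps)/\|\phi_\eps\|^{2}=-\mu_0\eps^{-2}+O(\eps^{-1})\to-\infty$, so $\cH_\eps$ has a low-lying eigenvalue by min--max.

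For the \emph{necessary} direction (ruling out low-lying eigenvalues when $\int V>0$) and for finiteness, I would use a Birman--Schwinger representation. Writing $V=V_{+}-V_{-}$ with $V_{\pm}\ge 0$, the number of negative eigenvalues of $L$ equals the number of eigenvalues $\ge 1$ of the compact operator $V_{-}^{1/2}\bigl(-d^{2}/dt^{2}+V_{+}\bigr)^{-1}V_{-}^{1/2}$; the one-dimensional Green's function of $-d^{2}/dt^{2}+V_{+}$ at zero energy is explicit, and the hypothesis $\int V>0$ should translate into an operator-norm bound excluding any eigenvalue $\ge 1$. Finiteness then follows from a Bargmann-type estimate such as $N(L)\le 1+\int_\Real|t|V_{-}(t)\,dt<\infty$, which is stable under the form-small perturbations entering $L_\eps$, giving a uniform bound on the number of low-lying eigenvalues of $\cH_\eps$. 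I expect the necessity at $\int V>0$ to be the main obstacle: an averaged hypothesis on $V$ has to rule out bound states that could in principle be generated by a deep local well inside $V_{-}$, so the proof must exploit the full one-dimensional resolvent kernel and the global balance between $V_{+}$ and $V_{-}$ rather than merely the sign of $\int V$.
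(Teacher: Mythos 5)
Your overall strategy is the right one and is essentially the paper's: the rescaling $t=x/\eps$ turning $\cH_\eps$ into $\eps^{-2}\bigl(-d^2/dt^2+V+\eps U+\eps^2W(\eps\,\cdot)\bigr)$ is exactly how the paper counts these eigenvalues (proof of Theorem~\ref{thmNumberLLE}(i)), the scaled trial function in your sufficiency argument is the same one used in the paper's proof of the present theorem, and your Bargmann bound $N\le 1+\int_\Real|t|\,|V^-(t)|\,dt$ is precisely the finiteness estimate the paper applies to $W_\eps$ and rescales. But two steps are genuinely open. First, the borderline case $\int_\Real V\,dx=0$, $V\not\equiv0$ is not actually proved: your expansion requires $\int_\Real V(t)|t-t_0|\,dt>\tfrac12$, and recentring the exponential cannot guarantee this (as $t_0\to\pm\infty$ that integral tends only to $\mp\int tV\,dt$, which may be arbitrarily small), while the half-bound-state fallback fails because resonance is \emph{not} equivalent to $\int V=0$ --- a zero-mean $V$ need not be resonant. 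The paper does not reprove this fact at all; it simply cites Reed--Simon Th.~XIII.110 for the existence of a negative eigenvalue of $-d^2/dx^2+V$ whenever $V\not\equiv0$ and $\int_\Real V\,dx\le0$. If you want a self-contained argument you need the second-order trial function from Simon's weak-coupling analysis, not a shifted exponential.

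Second, you leave the necessity direction ($\int_\Real V\,dx>0$ implies no low-lying eigenvalues) as an acknowledged obstacle, and your instinct about why it is hard is correct: the sign of $\int V$ alone does not control bound states generated by a deep narrow well inside $V^-$, so no operator-norm bound on the Birman--Schwinger kernel can be extracted from $\int V>0$, and the reduction to the fixed operator $-d^2/dt^2+V$ cannot close this direction by itself. The paper's route is different: it applies the integral criterion of Th.~XIII.110 not to the rescaled limit but directly to the full potential $W_\eps$ of $\cH_\eps$, whose integral $\eps^{-1}\int V+\int U+\int W$ is positive for small $\eps$, and concludes that $\cH_\eps$ has no negative eigenvalues whatsoever. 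Be aware that the cited criterion is a weak-coupling (small coupling constant) statement, so its invocation at fixed unit coupling is exactly the point your Birman--Schwinger worry is probing; whichever way you resolve it, your proposal as written supplies neither this half of the equivalence nor the borderline case of the other half.
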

%%%%%%%%%%%%%%%%%%%%%%%%%%%%%%%%%%%%%%%%%%%%

\begin{proof}
Assume that the potential~$V$ is not identically zero and that $\int_{\Real}V\,dx\le 0$. By~\cite[Th.XIII.110]{ReedSimonIV}, the operator $H=-\frac{d^2}{dx^2}+V$ has then at least one negative eigenvalue~$-\omega^2$, and we let $u$ be a corresponding normalized eigenfunction. Denote by $W_\eps(x)=W(x) + \eps^{-1} U(\eps^{-1}x)+\eps^{-2}\,V(\eps^{-1}x)$ the perturbed potential in~\eqref{Heps} and introduce the quadratic form
  \begin{equation*}
    a_\eps[\phi]=\int_\Real\bigl(|\phi'(x)|^2+W_\eps(x)|\phi(x)|^2\bigr)\,dx.
  \end{equation*}
Then the scaled function $u_\eps(x)=\eps^{-1/2}u\xe$ belongs to $L^2(\Real)$ and has norm one. A direct computation shows that
\begin{multline*}
    \eps^{2}a_\eps[u_\eps]=\int_\Real\bigl(|u'(t)|^2+V(t)|u(t)|^2\bigr)\,dt+\eps
    \int_\Real U\xe|u_\eps(x)|^2\,dx\\+\eps^{2}\int_\Real W(x)|u_\eps(x)|^2\,dx=-\omega^2(1+O(\eps)),
\end{multline*}
as $\eps\to 0$. Therefore, for all sufficiently small $\eps$, one has $a_\eps[u_\eps]\le-\frac12 \omega^2\eps^{-2}$, and we conclude from the minimax principle that there exists an eigenvalue $\lambda_\eps$ of $\cH_\eps$ such that $\lambda_\eps\le -\frac12 \omega^2 \eps^{-2}$.

Assume now that $\int_{\Real}V\,dx>0$. Then for sufficiently small $\eps>0$, the integral
\begin{equation*}
  \int_{\Real}W_\eps(x)\,dx=\eps^{-1}\int_{\Real}V(x)\,dx+\int_{\Real}U(x)\,dx+\int_{\Real}W(x)\,dx
\end{equation*}
is positive, and again by~\cite[Th.~XIII.110]{ReedSimonIV} the operator $\cH_\eps$ has no negative eigenvalues.

Let $N_\eps$ be the number of negative eigenvalues of $\cH_\eps$. It is known (see, e.g., \cite[Th.~5.3]{BerezinShubin1991}, \cite[Th.~7.5]{Simon2005}, \cite{Klaus1977, Klaus1982}) that the inequality
\begin{equation*}
  N_\eps\le 1+\int_{\Real}|x|\:|W^-_\eps(x)|\,dx
\end{equation*}
holds, where $f^-=\min\{f,0\}$ is the negative part of a function~$f$.
In view of the assumption $W \ge 0$, the negative part $W^-_\eps$ comes only from the $V$ and $U$ terms, and we estimate the integral above as follows:
\begin{align*}
  \int_{\Real}|x|\:|W^-_\eps(x)|\,dx&=\eps^{-2}\int_{\Real}|x|\:|V^-\xe+\eps U^-\xe|\,dx \\ &\le\int_{\Real}|t|\:|V^-(t)|\,dt+\eps\int_{\Real}|t|\:|U^-(t)|\,dt.
\end{align*}
The right-hand side remains bounded uniformly in small $\eps$, and thus $N_\eps$ is bounded as $\eps \to 0$, which completes the proof.
\end{proof}

Relatively (form-) bounded symmetric perturbations preserve semi-boun\-ded\-ness of the perturbed operator; see \cite[Th.~IV.4.11, Th.~VI.1.38]{Kato}. However, even if a family of self-adjoint operators~$A_\eps$ converges in the norm resolvent sense as $\eps\to0$ to a self-adjoint operator~$A$ that is bounded from below, the family~$A_\eps$ may fail to be uniformly bounded from below. Even if each operator~$A_\eps$ is individually semi-bounded, its lower bound may diverge to~$-\infty$ as~$\eps \to 0$. A classical example due to Rellich~\cite[Ex.~IV.4.14]{Kato} gives such an operator family~$A_\eps$ with a single eigenvalue tending to~$-\infty$. The family of operators~$\cH_\eps$ with $\delta'$-like perturbations provides a much stronger illustration of this effect. While $\cH_\eps$ converges in the norm resolvent topology to a self-adjoint operator that is bounded from below, the number of eigenvalues that diverge to~$-\infty$ as $\eps \to 0$ can be arbitrary but finite. In the next section, we describe the procedure for counting these low-lying eigenvalues.

%%%%%%%%%%%%%%%%%%%%%%%%%%%%%%%
%
\section{Counting the number of low-lying eigenvalues}\label{sec:counting}
%
%%%%%%%%%%%%%%%%%%%%%%%%%%%%%%%

Let us consider the Schrödinger operators
\begin{equation}\label{Talpha}
    T_\alpha = -\frac{d^2}{dx^2} + \alpha V(x), \qquad \dmn T_\alpha = W^2_2(\Real),
\end{equation}
with a real coupling constant $\alpha$. We denote by $\mathcal{R}(V)$ the set of all values of $\alpha$ for which the potential $\alpha V$ is resonant. For each non-zero function $V \in L^\infty(\Real)$ with compact support, the set $\mathcal{R}(V)$ is a countable subset of $\Real$ with accumulation points at $+\infty$ and/or $-\infty$; see~\cite{GolovatyFrontiers2019}.

We now recall the following definition~\cite{Klaus1982}. Let $A$ and $B$ be self-adjoint operators, with $B$ relatively $A$-compact. Suppose that $(a,b)$ is a spectral gap of $A$ and that $b \in \sigma_{\text{ess}}(A)$. If there exists an eigenvalue $e_\alpha$ of the perturbed operator $A + \alpha B$ in the gap $(a,b)$ for all $\alpha > 0$, and if $e_\alpha \to b-0$ as $\alpha \to 0$, then $\alpha = 0$ is called a \emph{coupling constant threshold}. Klaus~\cite{Klaus1982} established a connection between resonant potentials and such coupling constant thresholds. Both phenomena are closely related to the emergence of negative eigenvalues in Schrödinger operators.

Suppose that a Schr\"odinger operator $-\frac{d^2}{dx^2}+\cV$ has a zero-energy resonance with a corresponding half-bound state~$v$. According to~\cite[Th.~3.2]{Klaus1982}, if $\cU$ is a real-valued potential such that
\begin{equation*}
  \int_\Real \cU v^2\,dx<0,
\end{equation*}
then the perturbed operator  $H_\kappa=-\frac{d^2}{dx^2}+\cV+\kappa \cU$, $\kappa> 0$, has a coupling constant threshold at $\kappa=0$ and possesses a unique threshold eigenvalue $\lambda_\kappa$ obeying the asymptotics $\lambda_\kappa=-a^2\kappa^2+O(\kappa^3)$, as $\kappa\to 0$, where the coefficient $a$ is given by
\begin{equation}\label{AinAsymptotics}
  a=\frac1{v_-^2+v_+^2}\int_\Real \cU v^2\,dx.
\end{equation}
By reversing the direction of $\kappa$, we conclude that as $\kappa$ increases from zero, the operator~$H_\kappa$ acquires a negative eigenvalue that detaches from the bottom of the continuous spectrum.

Without loss of generality, we may assume that the support of the potential~$V$ is contained in the interval~$(-1,1)$. We now consider the spectral \textit{Regge problem} with spectral parameter $\omega$ (cf.~\cite{Shkalikov2007}):
\begin{equation}\label{ReggeProblem}
\begin{gathered}
   -\frac{d^2 u}{dx^2}+(V(x)+\omega^2)u=0, \qquad x\in(-1,1),
  \\
  \frac{d u}{dx}(-1)-\omega u(-1)=0,\quad \frac{d u}{dx}(1)+\omega u(1)=0.
\end{gathered}
\end{equation}
A complex number $\omega$ is called an eigenvalue of the Regge problem if there exists a nontrivial solution~$u$ of~\eqref{ReggeProblem}, in which case~$u$ is a corresponding eigenfunction.

%%%%%%%%%%%%%%%%%%%%%%%%%%%%%%%
%
\begin{thm}\label{thmNumberLLE}
%  The number of low-lying eigenvalues  of $\cH_\eps$ does not depend on $\eps$ and is equal to:
The number of low-lying eigenvalues  of $\cH_\eps$ is equal to each of the following:
  \begin{itemize}
    \item[\textit{(i)}] the number of negative eigenvalues of the operator $T_1=-\frac{d^2}{dx^2}+V(x)$;
    \item[\textit{(ii)}] the number of points in the set $\cR(V)$ belonging to the interval $(0,1)$;
    \item[\textit{(iii)}] the number of positive eigenvalues $\omega$ of the Regge problem \eqref{ReggeProblem}.
  \end{itemize}
\end{thm}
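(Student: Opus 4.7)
The plan is to establish pairwise equivalences among the three quantities (i)--(iii) and then identify each with the low-lying count via a rescaling argument. The central tool is the unitary $(U_\eps f)(t)=\eps^{1/2}f(\eps t)$ on $L^2(\Real)$, under which
\[
    U_\eps\cH_\eps U_\eps^{-1}=\eps^{-2}\tilde{\cH}_\eps,\qquad \tilde{\cH}_\eps:=-\frac{d^2}{dt^2}+V(t)+\eps U(t)+\eps^2 W(\eps t).
\]
Since the perturbations $\eps U$ and $\eps^2 W(\eps\,\cdot\,)$ tend to zero in $L^\infty$, the family $\tilde{\cH}_\eps$ converges to $T_1$ in the norm resolvent sense. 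An eigenvalue $\lambda_\eps$ of $\cH_\eps$ corresponds to an eigenvalue $\mu_\eps=\eps^2\lambda_\eps$ of $\tilde{\cH}_\eps$, and the low-lying regime $\lambda_\eps\to-\infty$ will correspond to $\mu_\eps$ bounded away from $0$; by spectral stability these are in bijection with the negative eigenvalues of $T_1$, which will produce (i).

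For (i) $\Leftrightarrow$ (iii), any $L^2$-eigenfunction $u$ of $T_1$ at energy $-\omega^2<0$ satisfies $-u''=-\omega^2 u$ outside $(-1,1)\supset\supp V$, and the integrability constraint forces the pure exponential form $u(x)=u(\pm 1)\,e^{\mp\omega(x\mp 1)}$ for $\pm x>1$ with $\omega>0$. Matching first derivatives at $x=\pm1$ then produces precisely the Regge interface conditions, giving a bijection with positive Regge eigenvalues; complex or non-positive $\omega$ are excluded by the self-adjointness of $T_1$.

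For (i) $\Leftrightarrow$ (ii), I would regard $T_\alpha$ as an analytic family and track how its negative eigenvalues evolve as $\alpha$ ranges over $[0,1]$. Standard perturbation theory shows that each such eigenvalue is a real-analytic function of $\alpha$ that can leave the discrete spectrum only by reaching $0$, which by definition happens precisely at $\alpha\in\cR(V)$. The direction of the motion will be fixed by multiplying the resonance equation $-v_j''+\alpha_j V v_j=0$ by $v_j$ and integrating, noting that $v_j$ is constant outside $(-1,1)$ so the boundary terms vanish:
\[
    \alpha_j\int_\Real V v_j^2\,dx=-\int_\Real(v_j')^2\,dx<0
\]
at any resonance $\alpha_j>0$ (a constant half-bound state would force $\alpha_j V=0$). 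Klaus's coupling-constant-threshold theorem applied with $\cV=\alpha_j V$ and perturbation direction $\cU=V$ then guarantees that exactly one new negative eigenvalue of $T_\alpha$ emerges from $0$ as $\alpha$ crosses $\alpha_j$ upward. Starting from $N(0)=0$ yields $N(1)=|\cR(V)\cap(0,1)|$.

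The main obstacle lies in the final step of the rescaling argument: ruling out eigenvalues $\mu_\eps$ of $\tilde{\cH}_\eps$ converging to $0$ more slowly than $\eps^2$, which would produce spurious low-lying eigenvalues of $\cH_\eps$ disconnected from the negative spectrum of $T_1$. When $V$ is non-resonant this is immediate, since $0$ is then isolated from $\sigma(T_1)$. When $V$ is resonant, the Klaus asymptotics $\mu_\eps=-a^2\eps^2+O(\eps^3)$ force the corresponding $\lambda_\eps=\eps^{-2}\mu_\eps$ to remain bounded; such finite-limit eigenvalues fall under the analysis of Section~\ref{sec:deltagenerated} and are therefore not low-lying.
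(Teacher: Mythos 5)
Your plan follows the paper's proof essentially step for step: the same unitary rescaling to $\cS_\eps=-\tfrac{d^2}{dx^2}+V+\eps U+\eps^2W(\eps\,\cdot)$ with norm resolvent convergence to $T_1$ for (i), the same Klaus coupling-constant-threshold argument with the sign computation $\alpha_j\int_\Real Vv_j^2\,dx=-\int_\Real (v_j')^2\,dx<0$ for (ii), and the same exponential-extension bijection for (iii). Your closing remark on ruling out eigenvalues $\mu_\eps\to 0$ slower than $\eps^2$ is a point the paper passes over silently, and your resolution via the Klaus asymptotics is consistent with the paper's Section~\ref{sec:deltagenerated}.
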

%
%%%%%%%%%%%%%%%%%%%%%%%%%%%%%%%

\begin{proof}
\textit{(i)} Consider the family of  Schr\"{o}dinger operators
\begin{equation*}
 \cS_\eps=-\frac{d^2}{dx^2}+V(x)+ \eps U(x)+\eps^2 W(\eps x)
\end{equation*}
with domain $W^2_2(\Real)$. This family is uniformly bounded from below and converges to the operator $T_1$ in the norm resolvent sense as $\eps\to 0$. Suppose that $T_1$ has $n$ eigenvalues $-\omega_{1}^2,\dots,-\omega_{n}^2$. Then, for sufficiently small $\eps$, the operators $\cS_\eps$ have exactly $n$ eigenvalues $-\omega_{1,\eps}^2,\dots,-\omega_{n,\eps}^2$ such that $\omega_{j,\eps}\to\omega_{j}$. Since $\cH_\eps$ is unitarily equivalent to $\eps^{-2} \cS_\eps$, it follows that $\cH_\eps$ has exactly $n$ negative eigenvalues $-\eps^{-2}\omega_{1,\eps}^2,\dots,-\eps^{-2}\omega_{n,\eps}^2$, each diverging to $-\infty$ as $\eps\to0$.

\textit{(ii)$\Leftrightarrow$(i)} The operator $T_0=-\frac{d^2}{dx^2}$ has no eigenvalues.
Suppose that the set $\cR(V) \cap (0,1)$ is nonempty, and let $\alpha_1$ be its smallest element.
We write
\begin{equation*}
  T_\alpha=-\frac{d^2}{dx^2}+\alpha_1 V(x)+(\alpha-\alpha_1) V(x),
\end{equation*}
and let $v_1$ be a half-bound state corresponding to the resonant potential $\alpha_1V$. Then
\begin{equation*}
  \int_\Real V v_1^2\,dx=-\frac1{\alpha_1}\int_\Real {v'_1}^2\,dx<0,
\end{equation*}
and the operator $T_\alpha$ has an eigenvalue $\lambda_\alpha=-a_1^2(\alpha-\alpha_1)^2+O((\alpha-\alpha_1)^3)$ for $\alpha>\alpha_1$, where $a_1$ is given by \eqref{AinAsymptotics} with $v=v_1$ and $\cU=V$.

As the parameter~$\alpha$ increases, it may pass through further points in $\cR(V) \cap (0,1)$, and at each such crossing the operator~$T_\alpha$ acquires a new simple eigenvalue. Since no negative eigenvalue can get absorbed by the continuous spectrum as $\alpha$ increases~\cite{Klaus1982}, this gives a total count of the negative eigenvalues of $T_1$.

\textit{(iii)$\Leftrightarrow$(i)}
 Suppose $\omega>0$ is an eigenvalue of the Regge problem with the corresponding eigenfunction~$u$. Then $-\omega^2$ is an eigenvalue of the operator~$T_1$, with the corresponding eigenfunction
\begin{equation}\label{ReggeEigenState}
	\psi(x) = \begin{cases}
		u(-1)\,e^{\omega (x+1)}, &\text{if } x < -1, \\
		\hfil u(x),   &\text{if } |x| \le 1, \\
		u(1)\,e^{-\omega (x-1)}, &\text{if } x > 1.
	\end{cases}
\end{equation}
Conversely, if $\psi$ is an eigenfunction of $T_1$ corresponding to eigenvalue $-\omega^2$, then, since $\supp V\subset (-1,1)$, we have $\psi(x)=a_- e^{\omega x}$ for $x\le-1$ and $\psi(x)=a_+ e^{-\omega x}$ for $x\ge1$. This implies that $\psi$ satisfies the boundary conditions in \eqref{ReggeProblem}, and its restriction to $(-1,1)$ is an eigenfunction of the Regge problem \eqref{ReggeProblem} with eigenvalue $\omega>0$.
\end{proof}

Theorem~\ref{thmNumberLLE} is of practical importance because solving the Regge problem on a finite interval or computing the resonance set~$\cR(V)$ is typically much easier than directly counting the eigenvalues of a Schrödinger operator on the real line. Another useful observation is that by replacing~$V$ with~$cV$ for sufficiently large~$c > 0$, we can make the number of negative eigenvalues of~$T_1$---and hence the number of low-lying eigenvalues of~$\cH_\eps$---arbitrarily large.

The following theorem describes the two-term asymptotic expansion of the low-lying eigenvalues, which are constructed and justified in Section~\ref{sec:asymptotics}.

%%%%%%%%%%%%%%%%%%%%%%%%%%%%%%
%
\begin{thm}\label{TheoremDeltaPrime}
	Assume  that the Schr\"odinger operator $T_1=-\frac{d^2}{dx^2}+V$ has $n$  eigenvalues $-\omega_1^2 <-\omega_2^2<\dots<-\omega_n^2<0$ with  eigenfunctions $v_1, v_2, \dots, v_n$. Then the operator family $\cH_\eps$  has $n$ low-lying eigenvalues $\lambda^\eps_1<\lambda^\eps_2< \dots < \lambda^\eps_n$  with asymptotics
	\begin{equation}\label{LowLyingAsymptotics}
		\lambda^\eps_k=-\eps^{-2}\left(\omega_k+\eps\,\dfrac{\int_\Real U|v_k|^2\,dx}{2\omega_k \|v_k\|^2}\right)^2+O(1),\quad \text{as } \eps\to +0.
	\end{equation}
	The corresponding eigenfunctions $v_{k,\eps}$ converge to zero in the weak topology.
\end{thm}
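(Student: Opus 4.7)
The plan is to reduce $\cH_\eps$ via the unitary rescaling $(\cU_\eps\psi)(x):=\eps^{-1/2}\psi(x/\eps)$ to the one-parameter family
\begin{equation*}
\cS_\eps:=-\frac{d^2}{dt^2}+V(t)+\eps\,U(t)+\eps^2 W(\eps t), \qquad \dmn\cS_\eps=W^2_2(\Real),
\end{equation*}
for which a direct computation gives $\cU_\eps^{-1}\cH_\eps\,\cU_\eps=\eps^{-2}\cS_\eps$. Hence $\lambda^\eps_k=\eps^{-2}\mu^\eps_k$ whenever $\mu^\eps_k$ is an eigenvalue of $\cS_\eps$, and the task reduces to showing that each simple isolated eigenvalue $-\omega_k^2$ of $T_1=\cS_0$ produces a unique nearby eigenvalue $\mu^\eps_k$ of $\cS_\eps$ with the expansion $\mu^\eps_k=-\omega_k^2+\eps\,\mu^{(1)}_k+O(\eps^2)$; once this is established, rewriting the binomial in~\eqref{LowLyingAsymptotics} yields the asymptotic formula, and Theorem~\ref{thmNumberLLE}\textit{(i)} guarantees that the eigenvalues so obtained exhaust the $n$ low-lying eigenvalues of $\cH_\eps$.

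To justify the expansion, I would construct an approximate eigenfunction via the Rayleigh--Schr\"odinger ansatz $\tilde v_k:=v_k+\eps w_k$, where $w_k\in\{v_k\}^{\perp}$ solves
\begin{equation*}
(T_1+\omega_k^2)\,w_k=\mu^{(1)}_k v_k-Uv_k.
\end{equation*}
The first-order coefficient $\mu^{(1)}_k$ is fixed by the Fredholm solvability condition for this inhomogeneous equation (using that $-\omega_k^2$ is a simple eigenvalue of $T_1$ with eigenfunction $v_k$), and yields precisely the explicit formula matching the subprincipal term in~\eqref{LowLyingAsymptotics}. The solution $w_k$ is then in $L^2(\Real)$ and decays at infinity at worst like $|x|e^{-\omega_k|x|}$. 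A direct expansion of $\cS_\eps\tilde v_k$ gives
\begin{equation*}
\cS_\eps\tilde v_k-\bigl(-\omega_k^2+\eps\mu^{(1)}_k\bigr)\tilde v_k=\eps^2\bigl(Uw_k-\mu^{(1)}_k w_k+W(\eps\cdot)(v_k+\eps w_k)\bigr),
\end{equation*}
and since $U$ and $W$ are bounded and $v_k, w_k\in L^2(\Real)$, the right-hand side has $L^2$-norm $O(\eps^2)$. The spectral theorem, combined with the stability of the simple isolated eigenvalue $-\omega_k^2$ under the bounded perturbation $\eps U+\eps^2 W(\eps\cdot)$ of operator norm $O(\eps)$, implies that $\cS_\eps$ has a unique eigenvalue in a fixed $\eps$-independent neighborhood of $-\omega_k^2$, and that this eigenvalue $\mu^\eps_k$ satisfies $|\mu^\eps_k-(-\omega_k^2+\eps\mu^{(1)}_k)|=O(\eps^2)$.

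For the eigenfunction statement, the corresponding normalized eigenfunctions $v^\eps_k$ of $\cS_\eps$ converge in $L^2(\Real)$ to $v_k/\|v_k\|$ by the same perturbation argument, so the unscaled eigenfunctions $v_{k,\eps}(x)=\eps^{-1/2}v^\eps_k(x/\eps)$ concentrate at the origin on the $\eps$-scale: for any compactly supported continuous $\phi$, a change of variables gives $|\la v_{k,\eps},\phi\ra|=\eps^{1/2}|\int_\Real v^\eps_k(t)\phi(\eps t)\,dt|\le C\eps^{1/2}\to 0$, and density of $C_c(\Real)$ in $L^2(\Real)$ extends this to weak convergence $v_{k,\eps}\rightharpoonup 0$. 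The main technical obstacle I anticipate is the non-analytic dependence of $\eps^2 W(\eps\cdot)$ on $\eps$, which prevents a direct appeal to Kato--Rellich analytic perturbation theory; this is circumvented by treating that term as a bounded perturbation of operator norm $O(\eps^2)$, which is precisely the precision required on the scaled spectral side to obtain the $O(1)$ remainder on the unscaled one.
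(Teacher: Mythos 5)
Your proposal is correct, but it takes a genuinely different route from the paper's. The paper proves \eqref{LowLyingAsymptotics} in the unscaled variable by matched asymptotics: an inner expansion on $(-\eps,\eps)$ governed by the Regge problem \eqref{ReggeProblem}, glued to outer exponential tails $e^{\pm\omega_\eps x/\eps}$, followed by a quasimode argument; because the eigenvalue itself is of size $\eps^{-2}$ and the quasimode lemma controls only the absolute error, the paper must push the inner expansion to order $\eps^3$ to reach an $O(1)$ remainder. You instead exploit the unitary equivalence $\cH_\eps\cong\eps^{-2}\cS_\eps$ (which the paper uses only for the eigenvalue count in Theorem~\ref{thmNumberLLE}\textit{(i)}) and run ordinary Rayleigh--Schr\"odinger perturbation theory for the simple isolated eigenvalues of $T_1$ under the bounded perturbation $\eps U+\eps^2W(\eps\cdot)$ of norm $O(\eps)$; on the rescaled side only $O(\eps^2)$ accuracy is required, so a single corrector $w_k$ suffices, and your residual computation and the exhaustion via Theorem~\ref{thmNumberLLE}\textit{(i)} are sound. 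This is shorter and bypasses the matching conditions; what it gives up is the explicit inner/outer profile of the eigenfunctions (Regge eigenfunction plus exponential tails), which the paper's method produces as a by-product. Two small points to repair. First, your bound $|\la v_{k,\eps},\phi\ra|\le C\eps^{1/2}$ tacitly uses a uniform $L^1$ bound on $v^\eps_k$, which Cauchy--Schwarz alone does not give (note $\|\phi(\eps\cdot)\|_{L^2}=\eps^{-1/2}\|\phi\|_{L^2}$); either invoke uniform exponential decay of the $v^\eps_k$ away from $\supp V\cup\supp U$, or write $v^\eps_k=v_k/\|v_k\|+r_\eps$ with $\|r_\eps\|_{L^2}\to0$ and use $v_k\in L^1(\Real)$ for the leading part and Cauchy--Schwarz for the remainder. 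Second, carry the sign through: your solvability condition gives $\mu^{(1)}_k=\int_\Real U|v_k|^2\,dx\,/\|v_k\|^2$ with a plus sign, hence $\lambda^\eps_k=-\eps^{-2}\bigl(\omega_k-\eps\int_\Real U|v_k|^2\,dx/(2\omega_k\|v_k\|^2)\bigr)^2+O(1)$; this agrees with the paper's own coefficient $\kappa=-\int_\Real U|\psi|^2\,dx/(2\omega\|\psi\|^2)$ from the formal construction, but differs by a sign from the displayed formula \eqref{LowLyingAsymptotics}, which appears to contain a typo --- so do not assert a ``match'' with the stated formula without noting this discrepancy.
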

%
%%%%%%%%%%%%%%%%%%%%%%%%%%%%%%%

We mention that one of the reasons why low-lying eigenvalues do not obstruct the norm resolvent convergence of~$\cH_\eps$ is that the corresponding eigenfunctions converge weakly to zero in $L^2(\Real)$.

%%%%%%%%%%%%%%%%%%%%%%%%%%%%%%%
%
\section{Negative eigenvalues generated by $\delta$-like potentials}\label{sec:deltagenerated}
%
%%%%%%%%%%%%%%%%%%%%%%%%%%%%%%%

As shown in the previous section, the emergence of low-lying eigenvalues is caused by a $\delta'$-like perturbation, and the number of these eigenvalues is determined by the profile~$V$ of the approximating $\delta'$-like potential. However, negative eigenvalues may also arise from $\delta$-like perturbations, whether or not a $\delta'$-like component is present in the operators~$\cH_\eps$. In such cases, at most one negative eigenvalue may appear, and it always has a finite limit as $\eps\to 0$.

The Schr\"odinger operator
\begin{equation*}
    S_\alpha =  -\frac{d^2}{dx^2} + W(x) + \alpha\delta(x)
\end{equation*}
with a $\delta$-potential of intensity $\alpha\in\Real$ acts by $S_\alpha y = - y'' + Wy$ on its natural domain
\begin{align*}
	\dmn S_\alpha = \big\{y\in W^2_2(\Real\setminus{0}) \colon y(+0)=y(-0),\;\; y'(+0) - y'(-0) = \alpha y(0)\big\}.
\end{align*}
So defined $S_\alpha$ is self-adjoint and has an absolutely continuous spectrum filling the positive half-line $\Real_+$, while its negative spectrum consists of at most one eigenvalue. We recall that the unperturbed operator $S_0=-\frac{d^2}{dx^2} + W(x)$ is  non-negative.

\begin{lem}\label{LemmaOnlyDelta}
Assume $W\in L^\infty(\Real)$ is a nonnegative function of compact support. Then there exists $\alpha_0\in(-\infty,0)$ such that, for all $\alpha<\alpha_0$, the operator $S_\alpha$ has exactly one negative eigenvalue.
\end{lem}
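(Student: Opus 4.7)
The plan is to establish the lemma in two parts: at most one negative eigenvalue (uniqueness) and at least one negative eigenvalue when $\alpha$ is sufficiently negative (existence). Uniqueness is already recorded in the paragraph preceding the lemma, and I would justify it via rank-one perturbation theory: $\alpha\delta(x)$ is a rank-one perturbation (in the quadratic-form sense) of the nonnegative operator $S_0=-\frac{d^2}{dx^2}+W$, whose spectrum lies in $[0,\infty)$; a rank-one perturbation can introduce at most one eigenvalue into the spectral gap $(-\infty,0)$.

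For existence, I would argue variationally with the closed semibounded quadratic form associated with $S_\alpha$, namely
$$q_\alpha[\phi]=\int_\Real |\phi'(x)|^2\,dx+\int_\Real W(x)|\phi(x)|^2\,dx+\alpha |\phi(0)|^2,\qquad \phi\in W^1_2(\Real).$$
Here the trace $\phi\mapsto\phi(0)$ is well defined by the Sobolev embedding $W^1_2(\Real)\hookrightarrow C(\Real)$ and is form-bounded with respect to the kinetic energy with arbitrarily small relative bound, so $q_\alpha$ is indeed closed and semibounded for every $\alpha\in\Real$. Fix a real-valued $\phi_0\in C_0^\infty(\Real)$ with $\phi_0(0)=1$, and set $M:=\|\phi_0'\|_{L^2}^2+\int_\Real W|\phi_0|^2\,dx$; note $M>0$ since $\phi_0$ is nonconstant. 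Then $q_\alpha[\phi_0]=M+\alpha$, so for any $\alpha<\alpha_0:=-M<0$ the Rayleigh quotient $q_\alpha[\phi_0]/\|\phi_0\|_{L^2}^2$ is negative. The minimax principle then yields $\inf\sigma(S_\alpha)<0$; since $\sigma_{\text{ess}}(S_\alpha)=[0,\infty)$ (the $\delta$-interaction is a compact perturbation in the resolvent sense and $W$ has compact support), this negative value is a discrete eigenvalue. Combined with uniqueness, this proves that for all $\alpha<\alpha_0$ the operator $S_\alpha$ has exactly one negative eigenvalue.

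I do not anticipate a substantive obstacle here: the argument is essentially a textbook combination of the minimax principle with rank-one perturbation theory. The only points requiring brief verification are that $q_\alpha$ truly represents $S_\alpha$ (this follows from integration by parts, which converts the form identity $q_\alpha[\phi,\psi]=(f,\psi)$ into $-\phi''+W\phi=f$ on $\Real\setminus\{0\}$ together with the jump condition $\phi'(+0)-\phi'(-0)=\alpha\phi(0)$ recorded in~\eqref{DeltaPointInteraction}) and that $\sigma_{\text{ess}}(S_\alpha)=[0,\infty)$, both of which are standard.
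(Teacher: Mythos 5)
Your proof is correct, and it follows the same basic strategy as the paper: establish uniqueness via rank-one perturbation of the nonnegative $S_0$ (identical to the paper's one-line argument) and existence via a trial function with negative Rayleigh quotient. The difference lies in the choice of trial function. You use a fixed bump $\phi_0\in C_0^\infty(\Real)$ with $\phi_0(0)=1$, which yields the threshold $\alpha_0=-M$ with $M=\|\phi_0'\|^2+\int_\Real W|\phi_0|^2\,dx$; the paper instead tests with the $\alpha$-dependent function $\psi_\alpha(x)=\sqrt{|\alpha|/2}\,e^{\alpha|x|/2}$, the exact normalized eigenfunction of the $W=0$ problem, obtaining $(S_\alpha\psi_\alpha,\psi_\alpha)=\tfrac{|\alpha|}{4}\bigl(\alpha+2\int_\Real W(x)e^{\alpha|x|}\,dx\bigr)$ and dominated convergence to conclude. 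Both arguments prove the lemma as stated, but the paper's choice is not merely cosmetic: it produces the \emph{specific} threshold $\alpha_0$ defined as the unique root of $f(\alpha)=\alpha+2\int_\Real W(x)e^{\alpha|x|}\,dx$ in~\eqref{EqnAlphaWithStar}, which is the quantity invoked in Theorem~\ref{TheoremDelta} and computed explicitly in the two examples (and which also extends to non-compactly-supported $W$ with $\int_\Real W e^{\alpha|x|}\,dx<\infty$, such as the harmonic oscillator). Your $\phi_0$-dependent constant would suffice for the lemma but would not supply that explicit, adapted threshold; if you wanted the sharper constant you would still need to optimize over trial functions, which is essentially what the paper's $\alpha$-dependent choice accomplishes.
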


\begin{proof}
If $W=0$, then the operator $S_\alpha$ is non-negative for $\alpha\ge0$, while for $\alpha<0$, it has a unique eigenvalue $\lambda=-\frac{\alpha^2}4$ with the normalized eigenfunction
	\begin{equation*}
		\psi_\alpha(x)=\sqrt{\tfrac{|\alpha|}2}\,e^{\frac{\alpha|x|}2},
	\end{equation*}
see \cite[Th.3.1.4]{Albeverio2edition}. For a generic $W$, we take an $\alpha<0$ and find that
	\begin{equation*}
		(S_\alpha\psi_\alpha,\psi_\alpha)
		= (-\psi''_\alpha,\psi_\alpha)+(W\psi_\alpha,\psi_\alpha)
		= -\frac{\alpha^2}{4} + \frac{|\alpha|}{2} \int_{\Real} W(x) e^{\alpha |x|}\,dx.
	\end{equation*}
	Since $\int_\Real W(x) e^{\alpha |x|}\,dx \to 0$ as $\alpha \to -\infty$ by the Lebesgue dominated convergence theorem, we conclude that the value
	\begin{equation*}
		(S_\alpha\psi_\alpha,\psi_\alpha)=\frac{|\alpha|}{4} \Bigl(\alpha + 2\int_\Real  W(x) e^{\alpha |x|}\,dx \Bigr)
	\end{equation*}
	becomes negative for negative $\alpha$ of large enough absolute value. As a result, for such~$\alpha$, the operator $S_\alpha$ has a negative eigenvalue. This eigenvalue is unique, because $S_\alpha$ is a rank-one perturbation of nonnegative operator~$S_0$.
\end{proof}	

Lemma~\ref{LemmaOnlyDelta} remains valid for positive potentials~$W$ such that
\begin{equation*}
	\int_\Real  W(x) e^{\alpha |x|}\,dx<\infty, \quad \text{for all }\alpha<0;
\end{equation*}
for example, for potentials with polynomial growth at infinity. Also, the above arguments suggest an explicit way to construct $\alpha_0$. The function
\begin{equation}\label{EqnAlphaWithStar}
	f(\alpha) = \alpha + 2\int_\Real  W(x) e^{\alpha |x|}\,dx
\end{equation}
is monotonically increasing in $\alpha\in(-\infty,0]$, $f(0)>0$ and $f$ becomes negative as $\alpha\to-\infty$.
Thus $f$ has a unique non-positive zero $\alpha_0$. Since $(S_\alpha\psi_\alpha,\psi_\alpha) = \frac{|\alpha|}{4}f(\alpha)$, we conclude that the operator $S_\alpha$ has a unique negative eigenvalue for all $\alpha<\alpha_0$.
	
\begin{exm}\rm	
Consider the family of operators
\begin{equation*}
    S_\alpha =  -\frac{d^2}{dx^2} +b^2(1+\sin x) + \alpha\delta(x)
\end{equation*}
Since $\int_\Real  (1+\sin x) e^{\alpha |x|}\,dx=-\frac{2}{\alpha}$ for $\alpha<0$, the zero of $f$ in \eqref{EqnAlphaWithStar} satisfies $\alpha^2=4b^2$. Hence $S_\alpha$ has a unique negative eigenvalue for all $\alpha<-2|b|$.
\end{exm}

\begin{exm}[{Cf.~\cite{Avakian87,FassariInglese94}}]\rm	Let $S_\alpha$ be the harmonic oscillator perturbed by the $\delta$ potential:
\begin{equation*}
  S_\alpha = -\frac{d^2}{dx^2} + kx^2 + \alpha\delta(x), \quad k>0.
\end{equation*}
In this case, the zero of $f$ is a negative root of $\alpha^4=8k$, since
\begin{equation*}
	\int_\Real  x^2 e^{\alpha |x|}\,dx=-\frac{4}{\alpha^3},\quad \alpha<0.
\end{equation*}
Therefore, the operator $S_\alpha$ has a unique negative eigenvalue  for all $\alpha<-2^{3/4}k^{1/4}$.
\end{exm}

When $V=0$, the operators
\begin{equation}\label{HepsVzero}
	\cH_\eps = -\frac{d^2}{dx^2} + W(x) + \eps^{-1} U(\eps^{-1}x),
\end{equation}
are uniformly bounded from below and converge in the norm resolvent sense to~$S_\alpha$ with
$\alpha=\int_{\Real}U\,dx$. This convergence, in particular, implies the convergence of negative eigenvalues; our next objective is to obtain a more precise asymptotic formula (proved in Section~\ref{sec:asymptotics}).

%%%%%%%%%%%%%%%%%%%%%%%%%%%%%%%%%%%%%%%
%
\begin{thm}\label{TheoremDelta}
    Suppose that $W$ and $U$ are $L^\infty(\Real)$-functions of compact support, and that $W$ is nonnegative. If $\int_{\Real}U\,dx<\alpha_0$, where the threshold value~$\alpha_0$ is the root of~\eqref{EqnAlphaWithStar}, then the operator $\cH_\eps$ of \eqref{HepsVzero}
	has a unique negative eigenvalue $\lambda_\eps$ satisfying the asymptotics
	\begin{multline}\label{DeltaBoundStateAsymp}
		\lambda_\eps=\lambda+\eps \bigg(\frac12\psi(0)^2\iint_{\Real^2} U(t)|t-\tau|U(\tau)\,d\tau\,dt
		\\
		+ \psi(0)\big(\psi'(-0)+\psi'(+0)\big)\int_{\Real} t U(t)\,dt\bigg)+O(\eps^2),\quad \eps\to 0.
	\end{multline}
	Here, $\psi$ is a real-valued, $L^2(\Real)$-normalized eigenfunction of $S_\alpha$, with $\alpha=\int_{\Real}U\,dx$, corresponds to the unique negative eigenvalue $\lambda$.

	Moreover, the normalized eigenfunctions $\psi_\eps$ of $S_\eps$ can be chosen in such a way that
	$\psi_\eps\to \psi$ in $L^2(\Real)$.
\end{thm}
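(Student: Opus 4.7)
The plan is to prove Theorem~\ref{TheoremDelta} by matched asymptotic expansions combined with the standard self-adjoint quasi-mode estimate $\operatorname{dist}(z,\sigma(\cH_\eps))\le \|(\cH_\eps-z)\Psi\|/\|\Psi\|$.

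First I would establish existence and uniqueness of the negative eigenvalue. Since $V\equiv 0$ here, the Klaus-type estimate used in Section~\ref{sec:existence} bounds the number of negative eigenvalues of $\cH_\eps$ uniformly in $\eps$; combined with the norm resolvent convergence $\cH_\eps\to S_\alpha$ proved in~\cite{GolovatyMFAT2012} and Lemma~\ref{LemmaOnlyDelta} (which gives $S_\alpha$ a unique negative eigenvalue $\lambda$ under $\alpha<\alpha_0$), this forces $\cH_\eps$ to have exactly one negative eigenvalue $\lambda_\eps$ for small $\eps$, with $\lambda_\eps\to\lambda$.

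Next I would construct the quasi-mode via matched asymptotics with outer ansatz $\psi(x)+\eps\psi_1(x)$ for $|x|\gg\eps$ and inner ansatz $g_0(x/\eps)+\eps g_1(x/\eps)+\eps^2 g_2(x/\eps)$ for $|x|\lesssim\eps$, together with $\lambda_\eps=\lambda+\eps\lambda_1+O(\eps^2)$. Substituting into the eigenvalue equation and equating powers of $\eps$ in the inner region gives $g_0\equiv\psi(0)$ and $g_1''=U\psi(0)$, whose matching-compatible solution is $g_1(t)=\psi(0)G(t)+\psi'(-0)t+\psi_1(-0)$ with $G(t)=\int_{-\infty}^t(t-s)U(s)\,ds$. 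Matching the asymptotics of $g_1$ as $t\to\pm\infty$ with the outer expansion recovers the $S_\alpha$-interface condition $\psi'(+0)-\psi'(-0)=\alpha\psi(0)$ and yields the jump $\psi_1(+0)-\psi_1(-0)=-\psi(0)\int_\Real tU(t)\,dt$, while matching at the next order through $g_2$ gives $\psi_1'(+0)-\psi_1'(-0)=\int_\Real U(t)g_1(t)\,dt$. The corrector $\psi_1$ then solves $(H_0-\lambda)\psi_1=\lambda_1\psi$ on $\Real\setminus\{0\}$ with these jumps; testing against $\psi$ and integrating by parts twice yields
\[
\lambda_1 = \psi(0)[\psi_1'(+0)-\psi_1'(-0)] - [\psi_1(+0)\psi'(+0)-\psi_1(-0)\psi'(-0)].
\]
Substituting the jumps and using $\int U G\,dt=\tfrac12\iint U(t)|t-\tau|U(\tau)\,d\tau\,dt$, together with $\alpha\psi(0)=\psi'(+0)-\psi'(-0)$ to cancel the terms containing $\psi_1(-0)$, gives precisely the coefficient in~\eqref{DeltaBoundStateAsymp}.

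Finally, I would glue the outer and inner expansions with a smooth cutoff on an intermediate scale to produce a trial function $\Psi_\eps\in W^2_2(\Real)$ satisfying $\|\Psi_\eps\|_{L^2}=1+O(\eps)$ and $\|(\cH_\eps-\lambda-\eps\lambda_1)\Psi_\eps\|_{L^2}=O(\eps^2)$. The self-adjoint distance estimate combined with the uniqueness from the first step then yields $|\lambda_\eps-\lambda-\eps\lambda_1|=O(\eps^2)$; the $L^2$-convergence $\psi_\eps\to\psi$ follows since the inner corrections in $\Psi_\eps$ are supported in a set of length $O(\eps)$ and thus vanish in $L^2$. The principal obstacle I expect is precisely this last step: the inner expansion must be carried one order beyond the desired accuracy, the commutator terms generated by differentiating the cutoff must be shown to contribute only $O(\eps^2)$ in $L^2$, and the intermediate matching scale must be chosen so that inner and outer mismatches are of the same order. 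The algebraic computations in the preceding step are elementary once the correct ansatz is fixed.
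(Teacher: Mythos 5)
Your proposal is correct and follows essentially the same route as the paper: matched asymptotic expansions (outer corrector $\psi_1$ with jumps at the origin, inner correctors up to order $\eps^2$, Fredholm solvability giving $\lambda_1$) followed by a quasimode estimate for the self-adjoint operator $\cH_\eps$, with uniqueness of $\lambda_\eps$ obtained from the Klaus-type bound and norm resolvent convergence. The only cosmetic difference is in producing the $W^2_2$ trial function: you glue inner and outer expansions with a cutoff on an intermediate scale, whereas the paper keeps the piecewise ansatz with interfaces exactly at $x=\pm\eps$ and removes the resulting $O(\eps^2)$ jumps by adding explicit compactly supported correctors.
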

%
%%%%%%%%%%%%%%%%%%%%%%%%%%%%%%%%%%%%%%%

If the potential $W$ is even, then the ground state $\lambda_\eps$ has asymptotics
\begin{equation*}
	\lambda_\eps=\lambda+\frac12\,\eps\psi(0)^2\iint_{\Real^2} U(t)|t-\tau|U(\tau)\,d\tau\,dt+O(\eps^2).
\end{equation*}
The eigenfunction $\psi$ is also even and therefore $\psi'(-0)+\psi'(+0)=0$.
If $W=0$ and $\int_{\Real}U\,dx<0$, the asymptotic formula \eqref{DeltaBoundStateAsymp} becomes
\begin{equation*}
	\lambda_\eps=-\frac14\left(\int_{\Real} U(t)\,dt+\frac12\:\eps\iint_{\Real^2} U(t)|t-\tau|U(\tau)\,d\tau\,dt\right)^2+O(\eps^2)
\end{equation*}
and coinsides with the Abarbanel--Callan--Goldberger formula up to the factor $\eps^2$. The formula arises when studying the weakly coupled Hamiltonians $-\frac{d^2}{dx^2} +\gamma U$, their negative eigenvalues and the absorption of such eigenvalues, as $\gamma\to 0$, by a continuous spectrum~\cite{Simon1976}.

Now suppose that the potential $V$ is nonzero. If $V$ is non-resonant, then the behavior of the negative spectrum of $\cH_\eps$ is described by Theorem~\ref{TheoremDeltaPrime}. However, if the shape $V$ of the $\delta'$-perturbation is resonant, then under certain conditions on the $\delta$-perturbation, the operator $\cH_\eps$ may have---in addition to low-lying eigenvalues---an extra eigenvalue that has a finite limit as $\eps\to 0$. We recall that in the resonant case, the norm resovent limit of $\cH_\eps$ as $\eps\to0$ is the operator~$\cH$ given by~\eqref{Hthetaeta}, with constants $\theta$ and $\eta$ determined by $V$ and $U$ via~\eqref{ThetaEta}.

\begin{lem}
Let $W$ be a non-negative function in $L^\infty(\Real)$ of compact support.
If $\eta\theta<0$ and the condition
\begin{equation}\label{CondWThetaW}
   \int_0^{+\infty}\big(W(-x)+\theta^2 W(x)\big)\,dx<\frac{|\eta\theta|}{2}
\end{equation}
holds, then the operator $\cH$ defined by \eqref{Hthetaeta} has a unique negative eigenvalue.
\end{lem}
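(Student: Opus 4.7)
The plan is to pass to the quadratic form of $\cH$, exhibit a one-parameter trial family that certifies at least one negative eigenvalue, and then use a simple dimension-counting argument with the minimax principle to rule out a second one.

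First I would write the quadratic form associated with $\cH$. Integration by parts on each half-line, combined with the interface relations $\phi(+0)=\theta\phi(-0)$ and $\phi'(+0)=\theta^{-1}\phi'(-0)+\eta\phi(-0)$, collapses the boundary contribution $\phi'(+0)\overline{\phi(+0)}-\phi'(-0)\overline{\phi(-0)}$ to $\eta\theta|\phi(-0)|^2$, so that for $\phi\in\dmn\cH$,
\begin{equation*}
(\cH\phi,\phi)=\int_{\Real}|\phi'|^2\,dx+\int_{\Real}W|\phi|^2\,dx+\eta\theta|\phi(-0)|^2.
\end{equation*}
The natural form domain is $\mathfrak{D}=\{\phi\in W_2^1(\Real\setminus\{0\}) : \phi(+0)=\theta\phi(-0)\}$, a closed subspace of $W_2^1(\Real\setminus\{0\})$, on which the first two terms are nonnegative while the last is strictly negative since $\eta\theta<0$. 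Because $W$ has compact support, $\sigma_{\mathrm{ess}}(\cH)=[0,\infty)$, so the negative spectrum consists of finitely many eigenvalues.

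For the existence of a negative eigenvalue I would use the trial function
\begin{equation*}
\phi_\kappa(x)=\begin{cases}e^{\kappa x},& x<0,\\ \theta\,e^{-\kappa x},& x>0,\end{cases}
\end{equation*}
which lies in $\mathfrak{D}$ for every $\kappa>0$, with $\phi_\kappa(-0)=1$ and $\phi_\kappa(+0)=\theta$. A direct computation yields
\begin{equation*}
(\cH\phi_\kappa,\phi_\kappa)=\tfrac{\kappa(1+\theta^2)}{2}+\int_{-\infty}^0 W(x)e^{2\kappa x}\,dx+\theta^2\int_0^{\infty} W(x)e^{-2\kappa x}\,dx+\eta\theta.
\end{equation*}
Letting $\kappa\to 0^+$, the kinetic part vanishes and dominated convergence gives the limit $\int_0^{\infty}\bigl(W(-x)+\theta^2 W(x)\bigr)\,dx+\eta\theta$, which by hypothesis~\eqref{CondWThetaW} is strictly less than $-|\eta\theta|/2<0$. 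Hence $(\cH\phi_\kappa,\phi_\kappa)<0$ for all sufficiently small $\kappa$, and the minimax principle forces at least one negative eigenvalue.

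For the upper bound I would apply the minimax principle to the second eigenvalue. Let $L\subset\mathfrak{D}$ be any two-dimensional subspace, and consider the continuous linear functional $\ell(\phi)=\phi(-0)$ (well defined through the Sobolev embedding $W_2^1\hookrightarrow C$ on each half-line). Its restriction $\ell|_L$ has a nontrivial kernel, so we can pick a nonzero $\phi\in L$ with $\phi(-0)=0$; the interface condition then forces $\phi(+0)=\theta\phi(-0)=0$ as well, the boundary term in $(\cH\phi,\phi)$ drops out, and $(\cH\phi,\phi)=\|\phi'\|^2+(W\phi,\phi)\ge 0$. Taking the supremum over $L\setminus\{0\}$ and infimum over $L$ gives $\lambda_2(\cH)\ge 0$, so $\cH$ admits no second negative eigenvalue. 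The main technical point to verify carefully is the correct identification of the form domain $\mathfrak{D}$ and the fact that the closed, semibounded form above indeed reproduces the operator $\cH$ via the Friedrichs construction; once this is settled, the two-sided bound $1\le n_-(\cH)\le 1$ follows mechanically from the trial-function computation and the kernel-dimension argument.
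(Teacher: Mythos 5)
Your proof is correct and takes essentially the same variational route as the paper: both write the quadratic form $(\cH\phi,\phi)=\|\phi'\|^2+(W\phi,\phi)+\eta\theta|\phi(-0)|^2$ and test it against an exponential trial function equal to $e^{\kappa x}$ for $x<0$ and $\theta e^{-\kappa x}$ for $x>0$. The only differences are that the paper fixes $\kappa=|\eta\theta|/(\theta^2+1)$ (the explicit ground state of the $W=0$ operator, which in fact yields the slightly weaker weighted sufficient condition with the factor $e^{-2\kappa x}$ inside the integral) whereas you send $\kappa\to0^+$, and that you make the uniqueness explicit via the codimension-one kernel of $\phi\mapsto\phi(-0)$, a point the paper leaves implicit as a rank-one form perturbation of a nonnegative operator.
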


\begin{proof}
Assume first that $W=0$. Integration by parts, on account of the interface conditions, yields
\begin{equation*}
    ( \cH y, y) = \|y'\|^2 + \theta\eta |y(-0)|^2,
\end{equation*}
thus for $\theta\eta\ge0$ the operator $\cH$ is non-negative. In contrast, if $\theta\eta<0$, then~$\cH$  has a unique eigenvalue
 \begin{equation}\label{EigenvalueH}
 \lambda =-\frac{\eta^2\theta^2}{(\theta^2+1)^2}
\end{equation}
with the normalized eigenfunction
\begin{equation*}
  \Psi(x)= \tfrac{\sqrt{2|\eta\theta|}}{\theta^2+1}\cdot
  \begin{cases}
   \phantom{A} e^{-\frac{\eta\theta}{\theta^2+1} x}& \text{for } x<0,\\
    \theta e^{\frac{\eta\theta}{\theta^2+1} x}& \text{for } x>0,
  \end{cases}
\end{equation*}
as can be verified by straightforward calculations.

Now consider the case of and arbitrary $W\in L^\infty(\Real)$, and let the constants $\theta$ and $\eta$ from~\eqref{ThetaEta} satisfy $\theta\eta<0$. Using the function~$\Psi$ defined above, we find that
  \begin{equation*}
    (\cH\Psi,\Psi)=(W\Psi,\Psi)-
    \frac{\eta^2\theta^2}{(\theta^2+1)^2}.
  \end{equation*}
Therefore, $\cH$ has a negative eigenvalue if
\begin{equation*}
 (W\Psi,\Psi)<\frac{\eta^2\theta^2}{(\theta^2+1)^2}.
\end{equation*}
This inequality is equivalent to
\begin{equation*}
  \int_0^{+\infty}\big(W(-x)+\theta^2 W(x)\big)\,e^{-\frac{2|\eta\theta|}{\theta^2+1} x}\,dx<\frac{|\eta\theta|}{2},
\end{equation*}
which is guaranteed under condition \eqref{CondWThetaW}.
\end{proof}	

\begin{thm}\label{TheoremWDelta}
Assume that $V$ is resonant with a half-bound state $v$, and that the potentials $W$ and $U$ satisfy the conditions $W\ge0$ and
\begin{equation}\label{CondEVexistsPrime}
  \int_0^{+\infty}\big(v_-^2W(-x)+ v_+^2 W(x)\big)\,dx<-\frac12 \int_\Real U v^2\,dx.
\end{equation}
Then, for $\eps$ small enough, the operator $\cH_\eps$ has a negative eigenvalue $\lambda_\eps$ converging, as $\eps\to 0$, to the negative eigenvalue of the operator $\cH$ defined by~\eqref{Hthetaeta}, where the parameters $\theta$ and $\eta$ given by \eqref{ThetaEta}.

If $W=0$ and $\int_\Real U v^2\,dx<0$, then this eigenvalue $\lambda_\eps$ has asymptotics
\begin{equation}\label{NFEW0}
  \lambda_\eps=-\frac1{(v_-^2+ v_+^2)^2}\left(\int_\Real U v^2\,dx\right)^2+O(\eps), \quad \eps\to 0.
\end{equation}
\end{thm}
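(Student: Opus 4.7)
The plan is to derive both assertions from the norm resolvent convergence $\cH_\eps\to\cH$ established in~\cite{GolovatyMFAT2012}, combined with the explicit description of the negative spectrum of $\cH$ provided by the preceding lemma.

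First, I would verify that hypothesis~\eqref{CondEVexistsPrime} is equivalent to the hypotheses of the preceding lemma. Substituting $\theta=v_+/v_-$ and $\eta=(v_-v_+)^{-1}\int_{\Real}Uv^2\,dx$ into~\eqref{CondWThetaW} and dividing by $v_-^2$ reproduces~\eqref{CondEVexistsPrime} verbatim. Since its right-hand side is positive, we must have $\int Uv^2\,dx<0$; because $\theta$ and $v_-v_+$ share their sign, this forces $\eta\theta<0$. Therefore the preceding lemma applies and $\cH$ has a unique negative eigenvalue $\mu$, isolated from the essential spectrum $[0,\infty)$.

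Second, I would transfer this spectral information to $\cH_\eps$ through a Riesz projection argument. Choose a small circle $\Gamma\subset\Comp\setminus\sigma(\cH)$ enclosing only~$\mu$; norm resolvent convergence implies that the projections $P_\eps = -(2\pi\rmi)^{-1}\oint_\Gamma(\cH_\eps-z)^{-1}\,dz$ converge in operator norm to the rank-one spectral projection of $\cH$ at~$\mu$. Hence $\rnk P_\eps=1$ for all sufficiently small $\eps$, producing a unique simple eigenvalue $\lambda_\eps$ of $\cH_\eps$ inside $\Gamma$ with $\lambda_\eps\to\mu$. This $\lambda_\eps$ is distinct from the low-lying eigenvalues of Theorem~\ref{TheoremDeltaPrime}, since those eventually leave every fixed compact neighborhood of $\mu$.

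Third, in the case $W=0$, formula~\eqref{EigenvalueH} gives $\mu=-\eta^2\theta^2/(\theta^2+1)^2$. Substituting $\theta^2=v_+^2/v_-^2$ and $\eta^2\theta^2=v_-^{-4}\bigl(\int Uv^2\,dx\bigr)^2$ and clearing denominators yields
\[
   \mu=-\frac{1}{(v_-^2+v_+^2)^2}\Bigl(\int_{\Real}Uv^2\,dx\Bigr)^2,
\]
which is the leading term of~\eqref{NFEW0}.

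Finally, to sharpen $\lambda_\eps\to\mu$ to $\lambda_\eps=\mu+O(\eps)$, I would construct a first-order quasi-mode. The leading inner ansatz $v_-^{-1}\Phi(-0)\,v(x/\eps)$ near the origin, smoothly matched to the eigenfunction $\Phi$ of $\cH$ outside, automatically eliminates the $\eps^{-2}V(x/\eps)$ term by virtue of $-v''+Vv=0$ and reproduces the interface jump $\Phi(+0)=\theta\Phi(-0)$ through the asymptotics of $v$ at $\pm\infty$. The remaining inner residual of order $\eps^{-1}U(x/\eps)v(x/\eps)$ is cancelled by an $\eps$-order corrector $\eps\,w(x/\eps)$, where $w$ is chosen to solve $-w''+Vw=-v_-^{-1}\Phi(-0)\,Uv$ with matching conditions dictating its bounded behaviour at $\pm\infty$. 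The principal obstacle is the joint inner/outer matching and the careful control of the cut-off cross terms, so that the full residual $(\cH_\eps-\mu)\Psi_\eps$ is $O(\eps)$ in $L^2(\Real)$; once this is accomplished, Kato's quasi-mode inequality gives $\lambda_\eps=\mu+O(\eps)$.
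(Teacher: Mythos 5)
Your argument is correct and follows essentially the same route as the paper: substitute $\theta=v_+/v_-$ and $\eta=(v_-v_+)^{-1}\int_\Real Uv^2\,dx$ to identify hypothesis~\eqref{CondEVexistsPrime} with condition~\eqref{CondWThetaW} (noting $\eta\theta=v_-^{-2}\int_\Real Uv^2\,dx<0$), invoke the preceding lemma for the unique negative eigenvalue of $\cH$, and pass to $\cH_\eps$ by norm resolvent convergence, with \eqref{NFEW0} obtained from \eqref{EigenvalueH} by the same substitution. The only divergence is your quasimode construction for the $O(\eps)$ remainder, which the paper does not carry out explicitly (it relies on the quantitative resolvent estimates of the cited convergence result); your sketch is a legitimate, more self-contained way to obtain that rate.
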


\begin{proof}
Inequality \eqref{CondEVexistsPrime} and asymptotic formula \eqref{NFEW0} are equivalent forms of \eqref{CondWThetaW} and \eqref{EigenvalueH} when evaluated for the specific values $\theta$ and $\eta$. In addition, inequality~\eqref{CondEVexistsPrime} ensures that $\eta\theta<0$. Indeed, it implies that $\int_\Real U v^2\,dx <0$, and since
\begin{equation*}
  \eta\theta=\frac{1}{v_-^2}\int_\Real U v^2\,dx,
\end{equation*}
we conclude that $\eta\theta < 0$. The convergence $\cH_\eps \to \cH$ in the norm resolvent sense as $\eps \to 0$ then guarantees that $\cH_\eps$ has a negative eigenvalue $\lambda_\eps$ approaching the unique negative eigenvalue of~$\cH$.
\end{proof}

%%%%%%%%%%%%%%%%%%%%%%
\section{Asymptotic expansions of eigenvalues}
\label{sec:asymptotics}
%%%%%%%%%%%%%%%%%%%%%%
% ----------------------------------------------------------------
In this section, we derive asymptotic formulas \eqref{LowLyingAsymptotics} and \eqref{DeltaBoundStateAsymp} by constructing and justifying formal asymptotic expansions of the eigenvalues. For the sake of definiteness, we assume that the supports of $U$ and $V$ are contained in~$(-1,1)$.

\subsection{Formal asymptotics}
We start with asymptotics \eqref{DeltaBoundStateAsymp}. The equation
\begin{equation*}
	-\frac{d^2y_\eps}{dx^2}+\left(W(x)+\eps^{-1}\,U(\eps^{-1}x)\right)y_\eps=\lambda_\eps y_\eps
\end{equation*}
on $\Real\setminus (-\eps,\eps)$ reads
\begin{equation}\label{SpectralEqnOut}
	-\frac{d^2y_\eps}{dx^2}+W(x)y_\eps=\lambda_\eps y_\eps,
\end{equation}
while after rescaling $(-\eps,\eps)$ to $(-1,1)$ and introducing  $w_\eps(t)=y_\eps(\eps t)$, one gets
\begin{equation}\label{SpectralEqnIn}
	-\frac{d^2w_\eps}{dt^2} + \eps^2 W(\eps t)w_\eps + \eps U(t) w_\eps=\eps^2\lambda_\eps w_\eps
\end{equation}
on $(-1,1)$. Also, the components $y_\eps$ and $w_\eps$ must satisfy the matching conditions
\begin{equation*}
	w_\eps(\pm 1)=y_\eps(\pm\eps), \qquad w_\eps'(\pm 1)=\eps y_\eps'(\pm\eps).
\end{equation*}
We look for approximations of eigenvalues and eigenfunctions of the form
\begin{align}\label{OmegaExp}
	\lambda_\eps&\sim \lambda_0+\eps \lambda_1,\\\label{YepsExp}
	y_\eps(x)&\sim Y_\eps(x)=
	\begin{cases}
		y_0(x)+\eps y_1(x),& \text{if } |x|>\eps,\\
		w_0(\eps^{-1}x)+\eps w_1(\eps^{-1}x)+\eps^2w_2(\eps^{-1}x),& \text{if } |x|<\eps,
	\end{cases}
\end{align}
where $y_0\neq 0$. Substituting the approximations into \eqref{SpectralEqnOut} and \eqref{SpectralEqnIn}, we find that $y_0$ and $y_1$ satisfy the equations
\begin{equation*}
	-\frac{d^2y_0}{dx^2}+W(x)y_0=\lambda_0 y_0,\qquad
	-\frac{d^2y_1}{dx^2}+W(x)y_1=\lambda_0 y_1+\lambda_1 y_0
\end{equation*}
on $\Real\setminus\{0\}$, while the fast-variable components $w_0$, $w_1$, and $w_2$ are solutions to the boundary value problems
\begin{align}
\label{W0problem}
  &\frac{d^2 w_0}{dt^2}=0,\qquad \frac{d w_0}{dt}(-1)=0, \quad \frac{d w_0}{dt}(1)=0;
\\\label{W1problem}
  &\frac{d^2 w_1}{dt^2}=U(t)w_0, \qquad \frac{d w_1}{dt}(-1)=y_0'(-0), \quad\frac{d w_1}{dt}(1)=y_0'(+0);
\\\label{W2problem}
  &
\begin{aligned}
\frac{d^2 w_2}{dt^2}&=U(t)w_1+(W(0)-\lambda_0)w_0,\\ &\frac{d w_2}{dt}(-1)=y_1'(-0)-y_0''(-0), \quad\frac{d w_2}{dt}(1)=y_1'(+0)+y_0''(+0).
\end{aligned}
\end{align}
Furthermore, the equalities
\begin{gather}\label{W0conds}
  w_0(-1)=y_0(-0), \quad w_0(1)=y_0(+0),
\\\label{W1conds}
w_1(-1)=y_1(-0)-y_0'(-0), \quad w_1(1)=y_1(+0)+y_0'(+0)
\end{gather}
hold. In view of \eqref{W0problem} and \eqref{W0conds}, $w_0$ is a constant function and therefore $y_0(+0)=y_0(-0)$.
Set $w_0(t)=y_0(0)$.  Problem \eqref{W1problem} can be solved if and only if
\begin{equation*}
  \frac{d w_1}{dt}(1)-\frac{d w_1}{dt}(-1)=y_0(0)\int_{-1}^1U(\tau)\,d\tau,
\end{equation*}
which yields the second interface condition for $y_0$:
\begin{equation}\label{Y0primeJump}
    y_0'(+0)-y_0'(-0)=\alpha y_0(0),\qquad \alpha=\int_\Real U(\tau)\,d\tau.
\end{equation}
Therefore, the leading terms $y_0$ and $\lambda_0$ of \eqref{OmegaExp}, \eqref{YepsExp} solve the problem
\begin{equation*}
\begin{gathered}
  -\frac{d^2y_0}{dx^2}+W(x)y_0=\lambda_0 y_0\quad\text{on }\Real\setminus\{0\},\\
  y_0(+0)=y_0(-0), \quad y_0'(+0)-y_0'(-0)=\alpha y_0(0).
\end{gathered}
\end{equation*}
Since $y_0$ must be a non-trivial solution, we conclude that $\lambda_0$ is an eigenvalue of $S_\alpha$ and $y_0$ is the corresponding (real-valued) eigenfunction; we denote it by $\psi$ and normalize by $\|\psi\|=1$.
Moreover, problem \eqref{W1problem} is solvable now and the solution $w_1$ is defined up to a constant.

Integrating twice the equation for $w_1$ and using the relations $w_1'(-1) = \psi'(-0)$ and $w_1(-1) = y_1(-0) - \psi'(-0)$, we arrive at the formula
\begin{equation}\label{W1}
	w_1(t)=\psi(0)\int_{-1}^t(t-\tau)U(\tau)\,d\tau + \psi'(-0)t + y_1(-1),
\end{equation}
which on account of $w_1(1)=y_1(+0) + \psi'(+0) $ yields
\begin{equation*}
y_1(+0) + \psi'(+0) = \psi(0)\int_{-1}^1(1-\tau)U(\tau)\,d\tau + \psi'(-0) + y_1(-0).
\end{equation*}
Set $\alpha_1=\int_\Real\tau U(\tau)\,d\tau$. Then $ y_1(+0)-y_1(-0)=-\alpha_1\psi(0)$, by \eqref{Y0primeJump}.

To get the second interface relation for $y_1$, we integrate the equation for $w_2$ and find that
\begin{equation*}
  w_2'(1)-w_2'(-1)=\int_{-1}^1U(t)w_1(t)\,dt+ 2(W(0)-\lambda_0)\psi(0).
\end{equation*}
We assume that $W$ is continuous in a vicinity of $x=0$ and this implies that
\begin{equation*}
 \psi''(+0)=\psi''(-0)=(W(0)-\lambda_0)\psi(0).
\end{equation*}
Combining this with the boundary conditions in \eqref{W2problem} and \eqref{W1}, we obtain
\begin{equation*}
    y_1'(+0)-y_1'(-0)=\alpha y_1(-0)
    +\alpha_1\psi'(-0)+ \gamma \psi(0),
\end{equation*}
where
\begin{equation*}
  \gamma=\int_\Real\int_{-\infty}^tU(t)(t-\tau)U(\tau)\,d\tau\,dt.
\end{equation*}
So, we get the boundary value problem for $y_1$:
\begin{gather}\label{EqnY1}
  -\frac{d^2y_1}{dx^2}+W(x)y_1=\lambda_0 y_1+\lambda_1 \psi\quad\text{on }\Real\setminus\{0\},
  \\\label{FirstCondY1}
  y_1(+0)-y_1(-0)=-\alpha_1\psi(0),
  \\\label{SecondCondY1}
  y_1'(+0)-y_1'(-0)-\alpha y_1(-0)=\alpha_1\psi'(-0)+ \gamma \psi(0).
\end{gather}
Observe that the solution $y_1$, if exists, is determined up to adding a multiple of~$\psi$; therefore, by the Fredholm alternative, the above non-homogeneous problem is solvable only when some extra conditions are met. To derive them, we multiply equation~\eqref{EqnY1} by the eigenfunction~$\psi$ and then integrate by parts twice to get
\begin{equation*}
	\psi(0)\big(y_1'(+0)-y_1'(-0)-\alpha y_1(-0)\big)-\psi'(+0)\big(y_1(+0)-y_1(-0)\big)=\lambda_1\int_{\Real} \psi^2(x)\,dx.
\end{equation*}
Relations \eqref{FirstCondY1} and \eqref{SecondCondY1} result in the expression
\begin{equation*}
	\lambda_1=\gamma  \psi^2(0)+\alpha_1 \psi(0)(\psi'(-0)+\psi'(+0))
\end{equation*}
for the second term in asymptotics \eqref{DeltaBoundStateAsymp}.

We seek an approximation of the low-lying eigenvalues and the corresponding eigenfunctions of the form
\begin{align}\label{ApprOmega}
\sqrt{-\lambda_\eps}&\sim \omega_\eps=\eps^{-1}(\omega+\eps\kappa),
\\\label{ApprY}
y_\eps(x)&\sim Y_\eps(x)=
  \begin{cases}
    e^{\frac{(\omega+\eps\kappa)x}{\eps}}, &\text{if } x<-\eps,
    \\
    u(\eps^{-1}x)+\eps w (\eps^{-1}x),& \text{if } |x|<\eps,
    \\
    (a+b\eps)e^{-\frac{(\omega+\eps\kappa)x}{\eps}}, &\text{if } x>\eps,
  \end{cases}
\end{align}
where $\omega>0$. On the region $\Real\setminus (-\eps,\eps)$, the function $Y_\eps$ satisfies equation \eqref{SpectralEqnOut} up to terms with a small norm in $L^2(\Real)$. For instance,  only the term $W(x) e^{\frac{(\omega+\eps\kappa)x}{\eps}}$ remains if $x<-\eps$, and its norm in $L^2(-\infty,0)$ is of order $O(\eps^{1/2})$. By substituting $Y_\eps$ into  \eqref{SpectralEqnIn} and matching the terms at $x=\pm \eps$, we obtain
\begin{equation}\label{YintoEqnConds}
\begin{gathered}
  -\frac{d^2 u}{dt^2}+(V(t)+\omega^2)u=0, \quad -\frac{d^2 w }{dt^2}+(V(t)+\omega^2)w =-(2\omega\kappa+U(t))u,
  \\
  u(-1)=e^{-\omega}, \quad \frac{d u}{dt}(-1)=\omega e^{-\omega}, \quad  u(1)=ae^{-\omega}, \quad \frac{d u}{dt}(1)=-a\omega e^{-\omega},
  \\
  \begin{aligned}
     w (-1)=&-\kappa e^{-\omega}, \quad\frac{d w }{dt}(-1)=\kappa(1-\omega) e^{-\omega}, \\
  &w (1)=(b -a\kappa)e^{-\omega}, \quad \frac{d w }{dt}(1)=(a\kappa(\omega-1)-b\omega) e^{-\omega}.
  \end{aligned}
\end{gathered}
\end{equation}
The relations for $u$ can be written as
\begin{gather}\label{ReggeEqn}
   -\frac{d^2 u}{dt^2}+(V(t)+\omega^2)u=0, \qquad t\in(-1,1),
  \\ \label{ReggeConds}
  \frac{d u}{dt}(-1)-\omega u(-1)=0,\quad \frac{d u}{dt}(1)+\omega u(1)=0,
\end{gather}
which is the Regge problem \eqref{ReggeProblem}. We assume that $\omega$ is an eigenvalue of the problem  with real-valued eigenfunction~$u$. Recall that  $-\omega^2$ is an eigenvalue of the operator~$T_1$ of~\eqref{Talpha} with the eigenfunction $\psi$ given by \eqref{ReggeEigenState}.
We also have $a=u(1)/u(-1)$.

From \eqref{YintoEqnConds} we similarly obtain the problem for $w$:
\begin{gather}\label{ReggeEqnV1}
  -\frac{d^2 w }{dt^2}+(V(t)+\omega^2)w =-(2\omega\kappa+U(t))u, \qquad t\in(-1,1),
  \\\label{ReggeCondsV1}
  \frac{dw }{dt}(-1)-\omega w (-1)=\kappa u(-1),\quad \frac{dw }{dt}(1)+\omega w (1)=-\kappa u(1).
\end{gather}
The problem is  generally unsolvable because $\omega$ is an eigenvalue of the homogeneous problem \eqref{ReggeEqn}, \eqref{ReggeConds}. In this situation, however, the free parameter $\kappa$ can be chosen so that the problem admits solutions.

Solvability condition of  \eqref{ReggeEqnV1}, \eqref{ReggeCondsV1} has the form
\begin{equation*}
  \kappa=-\frac{\int_{-1}^1Uu^2\,dt}{u^2(-1)+u^2(1)+2\omega\|u\|^2}.
\end{equation*}
When $\omega>0$, then the denominator can be written as $2\omega \|\psi\|^2$, with the eigenstate $\psi$ of~\eqref{ReggeEigenState} resulting in
\[
	\kappa = -\frac{\int_{\Real} U |\psi^2(x)|\,dx}{2 \omega \|\psi\|^2}.
\]
With $\kappa$ as above, there exists a solution $w$ of \eqref{ReggeEqnV1}, \eqref{ReggeCondsV1} defined up to the additive term $cu$. Finally, we can calculate
\begin{equation*}
 b=\frac{u(-1)w (1)-u(1)w (-1)}{u^2(-1)}.
\end{equation*}
Observe that the right hand side of the latter expression is independent of the chosen partial solution $w$. Hence, we have formally obtained asymptotics \eqref{LowLyingAsymptotics}.

%%%%%%%%%%%%%%%%%%%%%%%%%%%%%%%%%%%%%%%%

\subsection{Justification of asymptotics}
We now justify the asymptotic representations for $\lambda_\eps$ and $y_\eps$ by constructing a so-called \emph{quasimode} for the operator $\cH_\eps$.

Let $A$ be a self-adjoint operator in a Hilbert space $L$. A pair $(\mu, \phi)\in \Real\times \dmn A$ is called a \emph{quasimode} of $A$ with accuracy $\epsilon$ if $\|\phi\|_L = 1$ and $\|(A-\mu I)\phi\|_L \leq \epsilon$.

\begin{lem}[\hglue-0.1pt{\cite[p.139]{PDEVinitiSpringer}}]
	Assume $(\mu, \phi)$ is a quasimode of $A$ with accuracy $\epsilon>0$  and that the spectrum of $A$ in  the interval
	$[\mu-\epsilon, \mu+\epsilon]$ is discrete. Then there exists an eigenvalue $\lambda$ of $A$ such that $|\lambda-\mu|\leq\epsilon$.
\end{lem}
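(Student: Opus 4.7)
The plan is to argue by contradiction using the spectral theorem for the self-adjoint operator $A$. I would assume that the closed interval $[\mu-\epsilon,\mu+\epsilon]$ contains no eigenvalue of $A$. Since the spectrum of $A$ inside this interval is assumed to be purely discrete, having no eigenvalues there is equivalent to saying
\[
    \sigma(A)\cap[\mu-\epsilon,\mu+\epsilon]=\emptyset,
\]
so that $\mathrm{dist}(\mu,\sigma(A))>\epsilon$.

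Next, I would invoke the spectral resolution $\{E_t\}$ of $A$. For the quasimode $\phi$ with $\|\phi\|_L=1$, the spectral measure $d\langle E_t\phi,\phi\rangle$ is a probability measure supported on $\sigma(A)$, which by the previous step is contained in $\Real\setminus(\mu-\epsilon,\mu+\epsilon)$. Therefore $(t-\mu)^2\ge\epsilon^2$ on the support of this measure, and the functional calculus gives
\[
    \|(A-\mu I)\phi\|_L^2=\int_{\sigma(A)}(t-\mu)^2\,d\langle E_t\phi,\phi\rangle\ge\epsilon^2\int_{\sigma(A)}d\langle E_t\phi,\phi\rangle=\epsilon^2.
\]
The strict separation $\mathrm{dist}(\mu,\sigma(A))>\epsilon$ actually upgrades this to a strict inequality $\|(A-\mu I)\phi\|_L>\epsilon$, contradicting the quasimode hypothesis $\|(A-\mu I)\phi\|_L\le\epsilon$. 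Hence some eigenvalue $\lambda$ of $A$ must lie within distance $\epsilon$ of $\mu$.

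There is no real obstacle here; the only subtle point is carefully handling strict versus non-strict inequalities when passing from $\mathrm{dist}(\mu,\sigma(A))>\epsilon$ to the spectral integral estimate, which can equivalently be phrased through the resolvent bound $\|(A-\mu I)^{-1}\|\le 1/\mathrm{dist}(\mu,\sigma(A))$ that holds for self-adjoint operators. Either formulation yields the contradiction in one line once the reduction to $\sigma(A)\cap[\mu-\epsilon,\mu+\epsilon]=\emptyset$ has been made, which is the role of the discreteness assumption on the spectrum in the given interval.
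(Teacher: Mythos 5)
Your proof is correct. The paper does not actually prove this lemma---it is quoted from \cite[p.139]{PDEVinitiSpringer} and used as a black box---but your argument is the standard one: discreteness of the spectrum in $[\mu-\epsilon,\mu+\epsilon]$ reduces ``no eigenvalue there'' to $\sigma(A)\cap[\mu-\epsilon,\mu+\epsilon]=\emptyset$, whence $\mathrm{dist}(\mu,\sigma(A))>\epsilon$ (strictly, since the spectrum is closed and the interval is closed), and the spectral-theorem estimate $\|(A-\mu I)\phi\|\ge \mathrm{dist}(\mu,\sigma(A))\,\|\phi\|$ then contradicts the quasimode bound; your attention to the strict versus non-strict inequality is exactly the right point to be careful about.
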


Moreover, if the interval $[\mu - \Delta, \mu + \Delta]$ contains precisely one simple eigenvalue $\lambda$ with normalized eigenvector $u$, then
\begin{equation}\label{QuasimodeEst}
  \|\phi - e^{ia} u\| \leq 2\epsilon\Delta^{-1}
\end{equation}
for some real number $a$.

A quasimode of $\cH_\eps$ can be constructed based on the approximation $Y_\eps$. Note, however, that the function $Y_\eps$ defined by~\eqref{YepsExp} is not smooth enough to belong to the domain $\dmn \cH_\eps$, as it has jump discontinuities at the points $x = \pm \eps$. Nevertheless, all these jumps are small due to the construction; namely,
\begin{equation}\label{JumpsEst}
  \big|[Y_\eps]_{-\eps}\big| + \big|[Y_\eps]_{\eps}\big| + \big|[Y_\eps']_{-\eps}\big| + \big|[Y_\eps']_{\eps}\big| \le c \eps^2,
\end{equation}
where $[\,\cdot\,]_x$ denotes the jump of a function at the point $x$.

	Suppose the functions $\zeta$ and $\eta$ are smooth outside the origin, have compact supports contained in $[0,\infty)$, and $\zeta(+0)=1$, $\zeta'(+0)=0$, $\eta(+0)=0$, $\eta'(+0)=1$. We introduce the function
	\begin{equation*}
		r_\eps(x)=[Y_\eps]_{-\eps}\, \zeta(-x-\eps)-[Y_\eps']_{-\eps}\,\eta(-x-\eps)-[Y_\eps]_{\eps}\,\zeta(x-\eps)-[Y_\eps']_{\eps}\,\eta(x-\eps),
	\end{equation*}
	which has the jumps at $\pm\eps$ that are negative of those of $Y_\eps$. Therefore, the function $\hat{y}_\eps=Y_\eps+r_\eps$
	is continuous on~$\Real$ along with its derivative and consequently belongs to $W_2^2(\Real)$. Moreover, $\|\hat{y}_\eps\|=1+O(\eps)$ as $\eps\to 0$, because the main term $y_0=\psi$ is normalized in $L^2(\Real)$. The corrector function $r_\eps$ is small, because $r_\eps$ is identically zero on $(-\eps,\eps)$, and \eqref{JumpsEst} makes it obvious that
	\begin{equation*}
		\max_{|x|\ge \eps}\big|r_{\eps}^{(k)}(x)\big|\le c\eps^2,\qquad k=0,1,2.
	\end{equation*}
	A straightforward computation shows that a pair $\big(\lambda_0+\lambda_1\eps,\hat{y}_\eps\big)$ is a quasimode of~$\cH_\eps$ with accuracy of order $O(\eps^2)$, i.e., $\|\cH_\eps\hat{y}_\eps-(\lambda_0+\lambda_1\eps)\hat{y}_\eps \|\le c\eps^2\|\hat{y}_\eps\|$. Hence,
	\begin{equation*}
		|\lambda_\eps-(\lambda_0+\lambda_1\eps)|\le C\eps^2,
	\end{equation*}
	where $\lambda_\eps$ is an eigenvalue of $\cH_\eps$. Since $\lambda_\eps$ is a simple eigenvalue, the corresponding eigenfunction $y_\eps$ can be chosen so that $y_\eps\to y_0$ in $L^2(\Real)$, by \eqref{QuasimodeEst}.
Theorem~\ref{TheoremDelta} is proved.

The proof of Theorem~\ref{TheoremDeltaPrime} is similar, with one key difference. The approximation given by~\eqref{ApprOmega} and~\eqref{ApprY} is not sufficient to construct a quasimode of $\cH_\eps$ with sufficiently small accuracy. It is therefore necessary to refine the approximation as follows:
\begin{align*}
\sqrt{-\lambda_\eps} &\sim \omega_\eps = \eps^{-1}(\omega + \eps\kappa_1 + \eps^2\kappa_2 + \eps^3\kappa_3), \\
y_\eps(x) &\sim Y_\eps(x) =
  \begin{cases}
    e^{\omega_\eps x}, & \text{if } x < -\eps, \\
    u\xe + \eps u_1 \xe + \eps^2 u_2 \xe + \eps^3 u_3 \xe, & \text{if } |x| < \eps, \\
    (a + a_1\eps + a_2\eps^2 + a_3\eps^3)e^{-\omega_\eps x}, & \text{if } x > \eps,
  \end{cases}
\end{align*}
after which the construction proceeds as in the case of Theorem~\ref{TheoremDelta}.

%%%%%%%%%%%%%%%%%%%%%%%%%%%%%%

\section{Concluding remarks}

%%%%%%%%%%%%%%%%%%%%%%%%%%%%%%

Exactly solvable models of quantum mechanics with point interactions provide useful approximations for short-range interactions between particles. In particular, $\delta$- and $\delta'$-type potentials serve as mathematical abstractions representing idealized phenomena such as sharply localized charges or dipoles. However, while these models facilitate rigorous quantitative analysis, they do not always preserve the qualitative behavior of the corresponding regular systems.

For example, Hamiltonians with $\delta$-potentials capture well both the spectral and qualitative behavior of regular Hamiltonians with sharply localized attractive wells; in particular, the unique bound state of the regular system persists in the limit model. In contrast, Hamiltonians with localized dipoles may possess arbitrarily many negative eigenvalues, while the corresponding exactly solvable model with $\delta'_\theta$-potential has none. We stress that, in both settings, the exactly solvable models arise as norm resolvent limits of families of regular Schrödinger operators as the regularization parameter tends to zero.

Our results show that, despite norm resolvent convergence to a bounded below operator with $\delta'_\theta$-potential, the family of approximating Hamiltonians with localized dipoles is not uniformly bounded below: their low-lying eigenvalues diverge to $-\infty$ as the regularization parameter tends to zero. This demonstrates that even the strongest form of convergence of Hamiltonians does not ensure that the spectral or qualitative properties of the real physical models are reflected in the idealized limit. Therefore, while exactly solvable models offer powerful tools for quantitative analysis, caution is needed when interpreting their qualitative features as representative of the physical
systems they are meant to approximate.

% ----------------------------------------------------------------

% ----------------------------------------------------------------

\end{document}